\documentclass[a4paper,10pt]{amsart}
\usepackage{amsmath}
\usepackage{amssymb}
\usepackage[arrow, matrix, curve]{xy}
\usepackage{lmodern,dsfont}

\newtheorem{theorem}{Theorem}[section]
\newtheorem{proposition}[theorem]{Proposition}

\newtheorem{lemma}[theorem]{Lemma}
\newtheorem{definition}[theorem]{Definition}
\theoremstyle{remark}
\newtheorem{remark}{\bf Remark}

\newenvironment{proof_thm1}[1][Proof of Theorem~\ref{main_thm}:]{\begin{trivlist}
\item[\hskip \labelsep {\bfseries #1}]}{\end{trivlist}}
\newenvironment{proof_prop_trans}[1][Proof of Proposition~\ref{prop_transformation}:]{\begin{trivlist}
\item[\hskip \labelsep {\bfseries #1}]}{\end{trivlist}}

\title{A Root Parametrized Differential Equation for the Special Linear Group}
\author{Matthias Sei\ss}
\address{Matthias Sei\ss \\ Mathematisches Institut Universit\"at Heidelberg \\
Im Neuenheimer Feld 288 \\
69120 Heidelberg \\ Germany}
%\email{mseiss@mathi.uni-heidelberg.de}
\begin{document}
\maketitle
\begin{abstract}
Let $C \langle \textit{\textbf{t}} \rangle$ be the differential field generated by $l$ differential indeterminates 
$\textit{\textbf{t}}=(t_1, \dots, t_l)$ over an algebraically closed field $C$ of characteristic zero. 
In this article we present an explicit linear parameter differential equation over 
$C \langle \textit{\textbf{t}} \rangle$ with differential Galois group $\mathrm{SL}_{l+1}(C)$
and show that it is a generic equation in the following sense: If $F$ is an algebraically closed differential field
with constants $C$ and $E/F$ is a Picard-Vessiot extension with differential Galois group 
$H(C) \subseteq \mathrm{SL}_{l+1}(C)$, then a specialization of our equation 
defines a Picard-Vessiot extension differentially isomorphic to $E/F$. 
\end{abstract}
\section{Introduction}
For a differential field $F$ of characteristic zero with algebraically closed field of constants $C$ and a linear
algebraic group $G$ over $C$ the so-called inverse problem in differential Galois theory asks 
whether the group $G$ can be 
realized as a differential Galois group of a Picard-Vessiot extension $E/F$ for a linear differential 
equation over $F$. 
A solution to the inverse problem is known for the 
field of rational functions $C(z)$ with standard derivation $\frac{d}{dz}$ 
and was proved by J. Hartmann (\cite{Hart}) in 2002. In the years before 
important partial successes in this setting were achieved by several researchers. 
In 1994, M. Singer showed in \cite{Singer} that a large 
class of groups can be realized as differential Galois groups over $C(z)$ 
carrying over results from the classical setting when $C$ is the field of 
complex numbers. A constructive approach uses the Lie algebra of the group. 
With the strategy to choose an appropriate element
in the Lie algebra for the definition of a linear differential equation, 
A. Magid obtained first results at the end of \cite{Magid}.
In \cite{M/S} C. Mitschi and M. Singer used bound criteria for the differential Galois group
and showed that all connected groups occur as differential Galois groups over $C(z)$.
\\
In this article we present a method for the realization of the classical groups as differential
Galois groups over the differential field $C\langle t_1,\dots, t_l \rangle$ where $C\langle t_1,\dots, t_l \rangle$
is the differential field which is differentially generated by $l$ differential indeterminates
$\textit{\textbf{t}}=(t_1, \dots , t_l)$ over $C$ and $l$ denotes the Lie rank of the group. 
We explain how our method 
uses the geometric structure of the Lie group and we exhibit  
exemplarily the proofs for the special linear group $\mathrm{SL}_{l+1}$. Our method is constructive and yields explicit linear 
parameter differential equations.
The main results of this article are Theorem~\ref{main_thm} 
which contains the linear parameter differential equation for $\mathrm{SL}_{l+1}$
and Theorem~\ref{thm_gen} which states that this equation 
is a quasi-generic differential equation.
Nice parameter differential equations for the groups of type $B_l$, $C_l$, $D_l$ and $G_2$ ($l=2$) 
can be found in the last section.
\begin{theorem}
\label{main_thm}
The linear parameter differential equation
\begin{equation*}
L(y,\textbf{t})= y^{(l+1)} - \sum\nolimits_{i=1}^{l} t_i y^{(i-1)}=0
\end{equation*}
has $\mathrm{SL}_{l+1}(C)$ as differential Galois group over $C\langle \textit{\textbf{t}} \rangle$.
\end{theorem}
Let $G$ be one of the classical groups. The key tools for a realization of $G$ are the Lie algebra $\mathfrak{g}$
and bound criteria for the differential Galois group. More precisely,  
the idea is to choose an appropriate element $A( \textit{\textbf{t}})$ from the Lie algebra  
$\mathfrak{g}(C\langle \textit{\textbf{t}} \rangle)$  
for the definition of a matrix differential equation $\partial(\textit{\textbf{y}})= A( \textit{\textbf{t}})\textit{\textbf{y}}$  
such that we have enough information to show that the differential 
Galois group can not be smaller or larger than $G(C)$. The strategy is to construct an $A( \textit{\textbf{t}})$ 
such that it represents well the geometric structure of $G$.  
Given a root space decomposition of $\mathfrak{g}$, we choose $A( \textit{\textbf{t}})$ 
such that is has non-zero constant 
components in the root spaces belonging to the negative of the simple roots and 
such that the differential indeterminates 
$t_1, \dots , t_l$ parameterize $l$ root spaces which correspond to $l$ specific positive roots 
of height equal to the exponents of the root sytem.
We can then apply bound criteria for the differential Galois group of 
 $\partial(\textit{\textbf{y}})= A( \textit{\textbf{t}})\textit{\textbf{y}}$ and 
using structure theory, we can show that the upper and lower bound coincide in $G(C)$ 
for our choice of  $A( \textit{\textbf{t}})$.  
\\ 
%----------------------------------------generic--------------------------------------------
Let $G$ be a linear algebraic group over $C$ and let $C\langle s_1, \dots, s_n \rangle$ 
be the  differential field  which is generated by $n$ differential indeterminates 
$\textit{\textbf{s}}=(s_1, \dots, s_n)$ over $C$.
The generic inverse problem asks whether $G$ can be realized
over $C\langle \textit{\textbf{s}} \rangle$ in such a way that 
every Picard-Vessiot extension with differential Galois group $G(C)$ over a differential field $F$
with constants $C$ can be obtained be specializing the indeterminates to elements of $F$.
In the literature there are three main approaches for a solution to the generic inverse problem (\cite{Gold}, 
\cite{Miller} and \cite{LJAL}) with different definitions of genericity. 
In \cite{Gold} Goldman uses ideas similar to E. Noether's for polynomial equations in classical Galois theory
to compute generic equations for some specific groups $G \subset \mathrm{GL}_n$ 
including $\mathrm{SL}_{l+1}$.
He starts with a differential field generated by $n$   
differential indeterminates $y_1, \hdots , y_n$ and considers then the 
fixed field under the action of
$G(C)$ which is induced by matrix multiplication on the Wronskian $\mathrm{W}(y_1, \hdots , y_n)$. 
Goldman's construction yields a differential equation $L(y,\textit{\textbf{u}})$ where the coefficients are elements 
of the fixed field $C\langle \textit{\textbf{u}} \rangle$ which is generated by $n$ differentially independent
elements $\textit{\textbf{u}}=(u_1, \dots, u_n)$.
He shows that such an equation for a group $G$ satisfies the following property: 
If $E/F$ is a Picard-Vessiot
extension for a linear differential equation $L(y)$ with differential Galois group a subgroup of $G$ over any differential field $F$ with constants $C$,
then there is a specialization $\sigma: \textit{\textbf{u}} \mapsto \textit{\textbf{f}}$ such that $L(y,\sigma(\textit{\textbf{u}}))=L(y)$ 
where $\textit{\textbf{f}}=(f_1,\dots,f_n)$ with $f_i  \in F$.
Goldman's definition of a generic equation (see \cite{Gold}) 
 is more general than Definition~\ref{def_generic} below.
It does not require any additional property of the differential field $F$ and the 
extensions are obtained directly.
In 1970, J. Miller studied in \cite{Miller} {\it differentially Hilbertian differential fields} and solved the generic inverse problem
for some specific groups. 
In \cite{LJAL} L. Juan and A. Ledet 
pursued another method for the determination of generic equations. 
Their method is based on Kolchin's Structure Theorem which describes all possible
Picard-Vessiot extensions as function fields of irreducible $G$-torsors. 
In the case of $\mathrm{SO}_n$ their method is well applicable and yields a generic matrix 
differential equation with $\frac{1}{2}(n+2)(n-1)$ parameters.\\
We will show that the linear parameter differential equation 
in Theorem~\ref{main_thm} for $\mathrm{SL}_{l+1}(C)$ satisfies the following definition of a {\it quasi-generic} equation.
\begin{definition}\label{def_generic}
 Let $G$ be a linear algebraic group over $C$ and let $L(y,\textit{\textbf{s}})$ be a linear parameter 
 differential equation over $C\langle \textit{\textbf{s}} \rangle$. The linear differential equation 
 $L(y,\textit{\textbf{s}})$ will be called a {\it quasi-generic} differential equation for $G$, if the following
 conditions are satisfied:
 \begin{enumerate}
 \item The differential Galois group of $L(y,\textit{\textbf{s}})$ over $C\langle \textit{\textbf{s}} \rangle$ is $G(C)$.
\item If $F$ is an algebraically closed differential field with constants $C$ and $E/F$ is a Picard-Vessiot extension
with differential Galois group $H(C) \subseteq G(C)$, then there is a specialization 
% $\sigma:C \{ \textit{\textbf{s}} \} \rightarrow F$ 
  $\sigma: \textit{\textbf{s}} \mapsto \textit{\textbf{f}} $
 such that 
$L(y,\sigma(\textit{\textbf{s}}))$ defines a Picard-Vessiot extension which is differentially isomorphic to $E/F$
where $\textit{\textbf{f}}=(f_1,\dots,f_n)$ with $f_i  \in F$.
\item For a differential field $F$ with constants $C$ and any specialization 
$\sigma:\textit{\textbf{s}} \mapsto \textit{\textbf{f}}$ 
the differential Galois group of a Picard-Vessiot extension 
for $L(y,\sigma(\textit{\textbf{s}}))$ is a subgroup of $G(C)$ 
where $\textit{\textbf{f}}=(f_1,\dots,f_n)$ with $f_i  \in F$.
\end{enumerate}
\end{definition}

\begin{theorem}\label{thm_gen}
The equation in Theorem~\ref{main_thm} is a quasi-generic differential equation for $\mathrm{SL}_{l+1}(C)$ 
\end{theorem}
Thus, Theorem~\ref{thm_gen} gives a differential analogue for the group $\mathrm{SL}_{l+1}(C)$ of the Kummer equations for 
regular cyclic extensions in classical Galois theory. In particular, our equations for $\mathrm{SL}_{l+1}(C)$ are relatively simple 
differential equations which become generic after a suitable algebraic extension.\\
%--------------------------------------------------------------------------------
\section{Bounds for the differential Galois group}
We recall some basic definitions from differential Galois theory. 
Let  $F$ be an ordinary differential field of characteristic zero with an algebraically closed field of constants $C$ and derivation $\partial$. 
A linear differential equation over $F$ is an equation of the form  
$\partial(\textit{\textbf{y}})=A\textit{\textbf{y}}$ where $A \in \mathrm{M}_n(F)$. Here, 
$\mathrm{M}_n(F)$ denotes 
the set of all $n \times n$-matrix with coefficients in $F$.
A Picard-Vessiot ring $R$ for a linear differential equation $\partial(\textit{\textbf{y}})=A\textit{\textbf{y}}$ 
over $F$ is a 
differential ring which satisfies the following three properties:
\begin{enumerate}
 \item The ring $R$ is a simple differential ring, that is $R$ has no non-trivial differential ideals (ideals which are stable under the derivation).
\item There exists $Y \in \mathrm{GL}_n(R)$ such that $\partial(Y)=AY$.
\item The ring $R$ is generated as a ring by the entries $Y_{ij}$ of $Y$ and $\mathrm{det}(Y)^{-1}$ over $F$.
\end{enumerate}
The matrix $Y$ in (2) is called a fundamental solution matrix for $\partial(\textit{\textbf{y}})=A\textit{\textbf{y}}$.
A Picard-Vessiot field for $\partial(\textit{\textbf{y}})=A\textit{\textbf{y}}$ over $F$ is a differential field $E$ which 
is the field of fractions of a Picard-Vessiot ring for the equation. The differential Galois group $G$
of a linear differential equation $\partial(\textit{\textbf{y}})=A\textit{\textbf{y}}$ over $F$ is 
the group of all differential $F$-automorphisms of $E$ and it has a representation as 
a linear algebraic group. For a detailed introduction to differential Galois theory, 
we refer to the books \cite{CresHaj}, \cite{Magid} and \cite{P/S}. Throughout this article $C$ denotes an
algebraically closed field of characteristic zero. \\

Let $G$ be one of the classical groups of Lie rank $l$ over $C$ and let $F$ be a differential field with constants $C$.
The main idea for the realization of $G(C)$ is the definition of an appropriate differential structure on a finite 
dimensional $F$-vector space $M$ such that the differential Galois group in a sense to be defined by $M$ can not be larger or smaller than 
$G(C)$. The differential structure on $M$ is defined by a matrix differential equation $\partial(\textit{\textbf{y}})=A\textit{\textbf{y}}$ where 
$A \in \mathrm{M}_n(F)$ and $n$ is the dimension of a representation of $G(C)$. As in \cite{M/S} we apply an upper
and lower bound criterion to $A$ and choose $A \in \mathrm{M}_n(F)$ such that both bounds coincide. 
The main ingredient
for a successful choice of $A$ is the Lie algebra $\mathfrak{g}$ of $G$.\\
An upper bound criterion for the differential Galois group is given by the following Proposition 
(see \cite{P/S}, Proposition 1.31, (1)) which was first proven by Kovacic.
\begin{proposition}\label{Prop_upper}
 Let $H$ be a connected linear algebraic group over $C$ and let $A \in \mathfrak{h}(F)$. Then
 the differential Galois group $G(C)$ of the differential equation $\partial(\textbf{y})=A \textbf{y}$ 
 is contained (up to conjugation) in $H(C)$.
\end{proposition}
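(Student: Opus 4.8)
The plan is to realize the Picard--Vessiot ring of $\partial(\mathbf{y})=A\mathbf{y}$ as a quotient of the coordinate ring of $\mathcal{H}$ over $F$ and then to read the Galois group off that model. First I would fix the embedding $\mathcal{H}\hookrightarrow\mathrm{GL}_n$ given by the canonical representation, let $I(\mathcal{H})\subseteq C[\mathrm{GL}_n]$ be its defining ideal, and write $X=(X_{ij})$ for the generic matrix of coordinate functions, so that its image $\bar X$ in $C[\mathcal{H}]=C[\mathrm{GL}_n]/I(\mathcal{H})$ is the generic point of $\mathcal{H}$, a matrix in $\mathcal{H}(C[\mathcal{H}])\subseteq\mathrm{GL}_n(C[\mathcal{H}])$. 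On the $F$-algebra $S:=F\otimes_C C[\mathcal{H}]$ I would put the derivation that extends the one on $F$ and satisfies $\partial(\bar X)=A\bar X$.

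The crucial point, and the place where the hypothesis $A\in\mathrm{Lie}(\mathcal{H})(F)$ is actually used, is that this prescription is well defined on $S$. A derivation of $F\otimes_C C[\mathrm{GL}_n]$ extending $\partial_F$ and sending $X_{ij}$ to $(AX)_{ij}$ descends modulo $I(\mathcal{H})\otimes_C F$ exactly when that ideal is stable under it, i.e.\ when the right-invariant vector field $g\mapsto Ag$ on $\mathrm{GL}_n$ is tangent to $\mathcal{H}$ at every point. Since $\mathcal{H}$ is a subgroup, right translation by an element $g\in\mathcal{H}$ carries $T_e\mathcal{H}=\mathrm{Lie}(\mathcal{H})$ onto the tangent space of $\mathcal{H}$ at $g$, so the field $g\mapsto Ag$ is tangent to $\mathcal{H}$ as soon as its value $A$ at the identity lies in $\mathrm{Lie}(\mathcal{H})(F)$. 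Granting this, $(S,\partial)$ is a differential $F$-algebra, $\bar X$ is a fundamental solution matrix for $\partial(\mathbf{y})=A\mathbf{y}$, and $S$ is generated over $F$ as an algebra by the entries of $\bar X$ together with $\det(\bar X)^{-1}$.

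Next I would choose a maximal differential ideal $\mathfrak{m}\subsetneq S$ (it exists by a Zorn's-lemma argument) and set $R_0:=S/\mathfrak{m}$. Then $R_0$ is a simple differential ring, finitely generated over $F$, and since $C$ is algebraically closed its field of constants is again $C$; hence $R_0$ is a Picard--Vessiot ring for $\partial(\mathbf{y})=A\mathbf{y}$ over $F$, with fundamental matrix the image $U\in\mathcal{H}(R_0)$ of $\bar X$. By the uniqueness of the Picard--Vessiot ring (again valid because $C$ is algebraically closed) I may identify the Picard--Vessiot ring attached to the equation with $R_0$; passing from a previously chosen fundamental matrix to $U$ conjugates the Galois group by a fixed constant matrix, which is precisely why the containment is only asserted up to conjugation.

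It then remains to bound the Galois group inside $\mathcal{H}$. For $\sigma$ in the differential Galois group $\mathcal{G}(C)$ write $\sigma(U)=U M_\sigma$ with $M_\sigma\in\mathrm{GL}_n(C)$. Since $\sigma$ fixes $I(\mathcal{H})$ (its coefficients lie in $C\subseteq F$) and the entries of $U$ satisfy those equations, so do the entries of $\sigma(U)$; thus both $U$ and $UM_\sigma$ represent points of $\mathcal{H}$ over $R_0$, and therefore $M_\sigma=U^{-1}(UM_\sigma)$ is a point of $\mathcal{H}$ over the fraction field of $R_0$. As $M_\sigma$ has all entries in $C$ and $\mathcal{H}$ is cut out by polynomials over $C$, this forces $M_\sigma\in\mathcal{H}(C)$. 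Hence $\mathcal{G}(C)\subseteq\mathcal{H}(C)$ up to conjugation, as claimed. I expect the tangency verification of the second paragraph to be the only genuinely delicate step; the remainder is the standard structure theory of Picard--Vessiot rings as developed in \cite{P/S}.
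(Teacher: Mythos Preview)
Your argument is correct and is precisely the standard proof that the paper defers to by citing \cite{P/S} (see Proposition~1.31 there): one equips $F\otimes_C C[\mathcal{H}]$ with the derivation determined by $\partial(\bar X)=A\bar X$, checks well-definedness via $A\in\mathrm{Lie}(\mathcal{H})(F)$, passes to a quotient by a maximal differential ideal to obtain a Picard--Vessiot ring with fundamental matrix in $\mathcal{H}$, and concludes. One cosmetic point: rather than invoking the fraction field of $R_0$ (which need not be a domain a priori), you can simply note that $U\in\mathcal{H}(R_0)$ already has an inverse in $\mathcal{H}(R_0)$, so $M_\sigma=U^{-1}\cdot\sigma(U)\in\mathcal{H}(R_0)\cap\mathrm{GL}_n(C)=\mathcal{H}(C)$ directly.
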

Let $R$ be a Picard-Vessiot ring for $\partial(\textit{\textbf{y}})=A \textit{\textbf{y}}$ with Galois group $G$. Then the affine group scheme 
$\mathrm{Spec}(R)=\mathcal{Z}$ over $F$ is a $G$-torsor (see \cite{P/S}, Theorem 1.28). If $\mathcal{Z}$ has an $F$-rational 
point, that is $\mathcal{Z}$ is the trivial torsor, then Proposition~\ref{Prop_upper} has a partial converse. 
\begin{proposition}\label{Prop_lower}
 Let $R$ be a Picard-Vessiot ring for $\partial(\textit{\textbf{y}}) =A \textit{\textbf{y}}$ over $F$ with connected differential Galois group $G(C)$ and
 let $\mathcal{Z}$ be the associated torsor. Let $H(C) \supset G(C)$ be a connected 
 linear algebraic group with $A \in \mathfrak{h}(F)$. If $\mathcal{Z}$ is the trivial torsor, then
 there exists $B \in H(F)$ such that $\partial(B)B^{-1} + B A B^{-1}$ is an element of 
 $\mathfrak{g}(F)$.
\end{proposition}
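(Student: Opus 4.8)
The plan is to exploit the triviality of the torsor $\mathcal{Z} = \mathrm{Spec}(R)$ directly. Since $\mathcal{Z}$ is the trivial $\mathcal{G}$-torsor over $F$, it has an $F$-rational point; concretely this means there is an $F$-algebra isomorphism $R \cong F \otimes_C C[\mathcal{G}]$, or equivalently there exists a fundamental matrix $Y \in \mathrm{GL}_n(R)$ for $\partial(\textbf{y}) = A\textbf{y}$ whose image under this trivialization lies in $\mathcal{G}(F) \cdot Y_0$ for a suitable reference point. First I would make this precise: the trivialization produces an element $\tilde{Y} \in \mathcal{G}(R)$ (the "generic point" of $\mathcal{G}$ viewed inside $R$) together with the fact that the original fundamental solution matrix $Y$ factors as $Y = B^{-1}\tilde{Y}$ for some $B \in \mathrm{GL}_n(F)$. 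The key additional input is that $A \in \mathrm{Lie}(\mathcal{H})(F)$ and $\mathcal{G} \subseteq \mathcal{H}$, which will let us arrange that the factor $B$ can be taken in $\mathcal{H}(F)$ rather than merely in $\mathrm{GL}_n(F)$: the torsor $\mathcal{Z}$ sits inside the $\mathcal{H}$-torsor obtained by extension of structure group, and that $\mathcal{H}$-torsor is again trivial (it has the same $F$-point), so one gets compatible trivializations.

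Next I would compute the logarithmic derivative. Writing $Y = B^{-1}\tilde{Y}$ and using $\partial(Y)Y^{-1} = A$, a direct manipulation gives
\[
A = \partial(B^{-1}\tilde{Y})(B^{-1}\tilde{Y})^{-1} = -B^{-1}\partial(B)B^{-1}\tilde{Y}\tilde{Y}^{-1}B + B^{-1}\partial(\tilde{Y})\tilde{Y}^{-1}B,
\]
which rearranges to
\[
\partial(B)B^{-1} + BAB^{-1} = \partial(\tilde{Y})\tilde{Y}^{-1}.
\]
So it remains to show that the right-hand side, \emph{a priori} an element of $\mathrm{Lie}(\mathcal{H})(R)$, actually lies in $\mathrm{Lie}(\mathcal{G})(F)$. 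That it lies in $\mathrm{Lie}(\mathcal{G})$ follows because $\tilde{Y} \in \mathcal{G}(R)$, so its logarithmic derivative is a point of the Lie algebra of $\mathcal{G}$ (the logarithmic derivative of a $\mathcal{G}$-valued function is $\mathrm{Lie}(\mathcal{G})$-valued — this is the infinitesimal version of $\mathcal{G}$ being a group, and uses $\partial(\mathcal{G}(R)) \subseteq T_{\mathcal{G}}$). That the entries lie in $F$ and not just in $R$ follows from Galois descent: the left-hand side $\partial(B)B^{-1} + BAB^{-1}$ is manifestly fixed by the action of $\mathcal{G}(C)$ since $B \in \mathcal{H}(F)$ and $A \in \mathrm{Lie}(\mathcal{H})(F)$ are $\mathcal{G}$-invariant, hence so is the right-hand side; and $R^{\mathcal{G}(C)} = F$ because the extension $R \supseteq F$ is a Picard–Vessiot ring with Galois group $\mathcal{G}(C)$. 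Combining, $\partial(B)B^{-1} + BAB^{-1} \in \mathrm{Lie}(\mathcal{G})(F)$, which is exactly the assertion.

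The main obstacle is the bookkeeping in the first step: upgrading the $F$-rational point of $\mathcal{Z}$ to a \emph{compatible} pair of trivializations of the $\mathcal{G}$-torsor and the associated $\mathcal{H}$-torsor that actually produces the factor $B$ inside $\mathcal{H}(F)$ — this requires care with how the Picard–Vessiot ring, its structure as a $\mathcal{G}$-torsor, and the inclusion $\mathrm{Lie}(\mathcal{G}) \hookrightarrow \mathrm{Lie}(\mathcal{H})$ interact, and one must verify that choosing the new fundamental matrix $\tilde Y$ to take values in $\mathcal{G}(R)$ is consistent with the $\mathcal{H}(F)$-ambiguity in $B$. Once the correct $\tilde{Y}$ and $B$ are in hand, the logarithmic-derivative computation and the descent argument are routine. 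I would also remark that connectedness of $\mathcal{G}$ is used (as hypothesized) to ensure $R^{\mathcal{G}(C)} = F$ and that $\mathrm{Spec}(R)$ is geometrically integral, so the torsor language applies cleanly.
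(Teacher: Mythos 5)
Your argument is correct and is essentially the proof the paper defers to in van der Put--Singer: trivialize the torsor, write the fundamental matrix as $Y=B^{-1}\tilde{Y}$ with $B^{-1}\in\mathcal{Z}(F)$ and $\tilde{Y}\in\mathcal{G}(R)$, and compute $\partial(B)B^{-1}+BAB^{-1}=\partial(\tilde{Y})\tilde{Y}^{-1}\in\mathrm{Lie}(\mathcal{G})(R)\cap \mathrm{M}_n(F)=\mathrm{Lie}(\mathcal{G})(F)$. The one delicate point you rightly flag --- that $B$ lands in $\mathcal{H}(F)$ rather than merely $\mathrm{GL}_n(F)$ --- is settled by observing that $A\in\mathrm{Lie}(\mathcal{H})(F)$ lets one choose a fundamental matrix inside $\mathcal{H}$, so $\mathcal{Z}$ is a $\mathcal{G}$-stable subvariety of $\mathcal{H}_F$ and its $F$-rational point is automatically an $F$-point of $\mathcal{H}$.
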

For a proof see \cite{P/S}, Corollary 1.32.\\
The condition $\mathcal{Z}$ being the trivial torsor in Proposition~\ref{Prop_lower} is automatically satisfied if the 
cohomological dimension of $F$ is at most one (see \cite{Serr}, Chapter III, 2.4).
From \cite{Serr}, Chapter II, 3.3 b), we know that this is true for $F_2:=C(z)$, i.e. the function field with standard derivation $\frac{d}{dz}$.
In this setting C. Mitschi and M. Singer found a way in \cite{M/S}
to apply successfully Proposition~\ref{Prop_upper} and~\ref{Prop_lower} for a realization of a connected 
semisimple group $G(C)$.\\
In our situation, i.e. for the differential field $F_1:=C\langle t_1, \dots, t_l \rangle$, we have no information about whether %if 
the condition of Proposition~\ref{Prop_lower} is satisfied or not and we can therefore not use it as a lower bound criterion.
But we can apply it in an indirect way. 
To this purpose denote by $C \{ \textit{\textbf{t}} \}$ the differential ring which is differentially generated by the $l$ differential indeterminates
$\textit{\textbf{t}}=(t_1, \dots,t_l)$ over $C$ and
let $\partial(\textit{\textbf{y}})=A(\textit{\textbf{t}})\textit{\textbf{y}}$ be a matrix differential equation with defining matrix
$A(\textit{\textbf{t}})  \in C \{ \textit{\textbf{t}} \}^{n \times n}$.
We consider now a surjective specialization $\sigma : R_1 \rightarrow R_2$, where $R_1$, $R_2$ are
the differential subrings $R_1:=C \{ \textit{\textbf{t}} \} \subset F_1$ and $R_2 := C[z] \subset F_2$.
Then $\sigma$ yields a new 
differential equation $\partial(\textit{\textbf{y}})=A(\sigma(\textit{\textbf{t}}))\textit{\textbf{y}}$ over $F_2$.
Intuitively, we would now expect that the differential Galois group of the specialized equation
$\partial(\textit{\textbf{y}})=A(\sigma(\textit{\textbf{t}})) \textit{\textbf{y}}$ is contained in the differential
Galois group of the original equation. Indeed, we introduce the so-called specialization bound.
\begin{theorem}\label{specialization_bound}
Suppose the defining matrix $A(\textit{\textbf{t}})$ satisfies $A(\textit{\textbf{t}}) \in R_{1}^{n \times n}$.
Then the differential Galois group of the specialized equation  $\partial(\textit{\textbf{y}}) =A (\sigma ( \textit{\textbf{t}}))\textit{\textbf{y}}$ over $F_2$ is a 
subgroup of the differential Galois group for $A(\textit{\textbf{t}})$ over $F_1$. 
\end{theorem}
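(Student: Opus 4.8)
The plan is to realize the Picard--Vessiot ring of the generic equation $\partial(\textit{\textbf{y}})=A(\textit{\textbf{t}})\textit{\textbf{y}}$ over $F_1$ and that of the specialized equation $\partial(\textit{\textbf{y}})=A(\sigma(\textit{\textbf{t}}))\textit{\textbf{y}}$ over $F_2$ as quotients of one common differential ring, and then to compare their Galois groups using that Picard--Vessiot rings are coordinate rings of torsors. Concretely, let $U:=R_1[X_{11},\dots,X_{nn},\det(X)^{-1}]$ be the coordinate ring of $\mathrm{GL}_{n,R_1}$, equipped with the derivation extending $\partial$ on $R_1$ by $\partial(X)=A(\textit{\textbf{t}})X$; this is legitimate precisely because $A(\textit{\textbf{t}})\in R_1^{n\times n}$. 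The kernel $\mathfrak{m}:=\ker\sigma$ is a differential prime ideal of $R_1$ with $R_1/\mathfrak{m}\cong R_2$, hence $\mathfrak{m}U$ is a differential ideal of $U$ and $\bar U:=U/\mathfrak{m}U$ is the coordinate ring of $\mathrm{GL}_{n,R_2}$ carrying the derivation $\partial(X)=A(\sigma(\textit{\textbf{t}}))X$.

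Next I realize both Picard--Vessiot rings inside this picture. Choose a maximal differential ideal $\mathfrak{q}$ of $F_1[X,\det(X)^{-1}]$; then $R_1^{\mathrm{PV}}:=F_1[X,\det(X)^{-1}]/\mathfrak{q}$ is a Picard--Vessiot ring over $F_1$ with Galois group $\mathcal{G}_1$, and $\mathrm{Spec}(R_1^{\mathrm{PV}})$ is a torsor under $\mathcal{G}_1$ inside $\mathrm{GL}_{n,F_1}$. Set $\mathfrak{Q}:=\mathfrak{q}\cap U$, a differential ideal of $U$ with $\mathfrak{Q}\cap R_1=0$; the ring $\mathcal{R}:=U/\mathfrak{Q}$ embeds in $R_1^{\mathrm{PV}}$ (so has constants $C$) and recovers $R_1^{\mathrm{PV}}$ after inverting $R_1\setminus\{0\}$, so $\mathrm{Spec}(\mathcal{R})$ is the scheme-theoretic closure of the generic torsor in $\mathrm{GL}_{n,R_1}$. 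Now reduce: put $\bar{\mathfrak{Q}}:=(\mathfrak{Q}+\mathfrak{m}U)/\mathfrak{m}U$. \emph{Provided $\bar{\mathfrak{Q}}$ is a proper ideal of $\bar U$}, pick a maximal differential ideal $\hat{\mathfrak{n}}\supseteq\bar{\mathfrak{Q}}$ of $\bar U$; since the only differential ideals of $R_2=C[z]$ are $0$ and $(1)$ one gets $\hat{\mathfrak{n}}\cap R_2=0$, so $S_2:=\bar U/\hat{\mathfrak{n}}$ is differentially simple and finitely generated as a $C$-algebra, hence has constants $C$, and $F_2\otimes_{R_2}S_2$ is a Picard--Vessiot ring for the specialized equation; by uniqueness of Picard--Vessiot rings its Galois group is conjugate to $\mathcal{G}_2$. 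The inclusion $\bar{\mathfrak{Q}}\subseteq\hat{\mathfrak{n}}$ yields a differential surjection $\mathcal{R}\twoheadrightarrow S_2$, i.e.\ $\mathrm{Spec}(S_2)\subseteq\mathrm{Spec}(\mathcal{R})$.

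The comparison then runs as follows. By the torsor property, $(P,Q)\mapsto(P,P^{-1}Q)$ is an isomorphism $\mathrm{Spec}(R_1^{\mathrm{PV}})\times_{F_1}\mathrm{Spec}(R_1^{\mathrm{PV}})\xrightarrow{\ \sim\ }\mathrm{Spec}(R_1^{\mathrm{PV}})\times_C\mathcal{G}_1$. This morphism is an automorphism of $\mathrm{GL}_{n,R_1}\times_{R_1}\mathrm{GL}_{n,R_1}$, hence preserves scheme-theoretic closures; since $\mathcal{G}_1$ is already defined over $C$ and closure commutes with the flat base change $(-)\times_C\mathcal{G}_1$, the scheme-theoretic closure of $\mathrm{Spec}(R_1^{\mathrm{PV}})\times_{F_1}\mathrm{Spec}(R_1^{\mathrm{PV}})$ inside $\mathrm{GL}_{n,R_1}\times_{R_1}\mathrm{GL}_{n,R_1}$ is exactly $\{(P,Q):P\in\mathrm{Spec}(\mathcal{R}),\ P^{-1}Q\in\mathcal{G}_1\}$. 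Granting that $\mathrm{Spec}(S_2)\times_{R_2}\mathrm{Spec}(S_2)$ lies inside this closure, any two points of the $\mathcal{G}_2$-torsor $\mathrm{Spec}(S_2)$ have difference in $\mathcal{G}_1$; and since the division map is faithfully flat from $\mathrm{Spec}(S_2)\times_{F_2}\mathrm{Spec}(S_2)$ onto $\mathcal{G}_2\times_C F_2$, this forces $\mathcal{G}_2\subseteq\mathcal{G}_1$ up to conjugation, which is the assertion of the theorem. All the facts about Picard--Vessiot rings used here (ring of constants, uniqueness, torsor structure) are standard; cf.\ \cite{P/S}.

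I expect the real work to lie entirely in the two places flagged above: that $\bar{\mathfrak{Q}}$ is proper, and, more sharply, that $\mathrm{Spec}(\mathcal{R})$ has good reduction along $V(\mathfrak{m})\hookrightarrow\mathrm{Spec}(R_1)$ — i.e.\ that the generic point of $V(\mathfrak{m})$ lies in the dense open locus over which $\mathcal{R}$ is $R_1$-flat, since only there does $\mathcal{R}\otimes_{R_1}\mathcal{R}$ remain torsion-free, so that $\mathrm{Spec}(S_2)\times\mathrm{Spec}(S_2)$ really is contained in the scheme-theoretic closure of the generic ``double torsor'' rather than merely in $\mathrm{Spec}(\mathcal{R})\times_{R_1}\mathrm{Spec}(\mathcal{R})$. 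To settle this I would exploit that $\mathfrak{Q}$ and $\mathfrak{m}$ are differential, so that the bad-reduction locus is a \emph{proper differential} closed subscheme of $\mathrm{Spec}(R_1)$, and combine this with the hypothesis that $\sigma$ is a \emph{surjection} onto $C[z]$ to rule out that the differential subvariety $V(\mathfrak{m})$ is swallowed by it. Making this last step precise is the crux of the proof; everything preceding it is formal.
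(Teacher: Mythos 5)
Your overall strategy is the same as the one the paper itself sketches for this theorem: use $A(\textit{\textbf{t}})\in R_1^{n\times n}$ to put a differential structure on $R_1[\mathrm{GL}_n]$, contract a maximal differential ideal $\mathfrak{q}\subset F_1[\mathrm{GL}_n]$ to $\mathfrak{Q}=\mathfrak{q}\cap R_1[\mathrm{GL}_n]$, push it forward along $\sigma$, enlarge to a maximal differential ideal over $R_2$, and compare the two Galois groups. Your $\mathfrak{Q}$, $\bar{\mathfrak{Q}}$ and $\hat{\mathfrak{n}}$ are exactly the paper's $I_1$, $\sigma(I_1)$ and $I_2$. Where you go beyond the paper's text is in trying to make the final comparison precise through the torsor isomorphism $(P,Q)\mapsto (P,P^{-1}Q)$ and scheme-theoretic closures; the paper's sketch instead ends with ``intuitively $\mathcal{G}_2$ should be contained in $\mathcal{G}_1$'' and defers the complete argument to \cite{Seiss1}. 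Your identification of the division-map/closure argument as the right way to rigorize that step is sound.

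The problem is that the two points you flag as open are genuine gaps, and they are precisely the points at which the paper's own sketch also stops (it literally says ``At this point we assume that $\sigma(I_1)$ is a proper differential ideal''). So what you have is a more carefully organized version of the sketch, not a proof. Concretely: (i) properness of $\bar{\mathfrak{Q}}$ is the statement that the fibre of $\mathrm{Spec}(\mathcal{R})\to\mathrm{Spec}(R_1)$ over $\mathfrak{m}=\ker\sigma$ is nonempty; Chevalley only gives that the image contains a dense open set, and your proposed remedy --- that the bad locus is a proper \emph{differential} closed subset which a \emph{surjective} $\sigma$ must avoid --- is not obviously available, since the complement of the image of a morphism of differential schemes is a constructible set that need not be cut out by a differential ideal, and even if it were, a differential prime $\mathfrak{m}$ can perfectly well be contained in a proper differential closed subset, so surjectivity of $\sigma$ alone does not rule this out. (ii) The containment of $\mathrm{Spec}(S_2)\times_{R_2}\mathrm{Spec}(S_2)$ in the closure of the generic double torsor, rather than merely in $\mathrm{Spec}(\mathcal{R})\times_{R_1}\mathrm{Spec}(\mathcal{R})$, is exactly the content hidden in the paper's ``intuitive'' step, and you do not supply it. Until both are settled the argument does not yield $\mathcal{G}_2\subseteq\mathcal{G}_1$; the paper's full proof of these points is only in \cite{Seiss1}, and a self-contained write-up must provide them.
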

Our proof of Theorem~\ref{specialization_bound} is quite involved and is given in \cite{Seiss}, Theorem 4.3.
\section{The method and results from the theory of algebraic groups}
In this section, we describe 
the choice of the defining matrix $A(\textit{\textbf{t}})$ in $\mathfrak{g}(R_1)$ 
and the strategy to show that the upper and lower bound coincide for it. We 
recall some structure theory about semisimple linear algebraic groups.\\
Let $\Phi$ be the root system of $\mathfrak{g}(C)$ and denote by  $\Delta= \{ \alpha_1, \dots , \alpha_l \}$
a basis of $\Phi$. We write $\Phi^+$ for the set of positive roots of $\Phi$ and $\Phi^-$ for 
the negative roots, respectively. Let 
\begin{equation*}
  \mathfrak{g}(C) = \mathfrak{h}(C) \oplus \bigoplus_{\alpha \in \Phi} \mathfrak{g}_{\alpha}(C)
\end{equation*}
be a Cartan decomposition for $\mathfrak{g}(C)$ with Cartan algebra $\mathfrak{h}(C)$ in diagonal form
and one-dimensional root spaces $ \mathfrak{g}_{\alpha}(C)$ for the roots $\alpha \in \Phi$.
Let us denote by $H_{\alpha} \in \mathfrak{h}$ the co-root for a root $\alpha \in \Phi$, 
meaning that $H_{\alpha}$ is given by $H_{\alpha}= 2 \alpha/ (\alpha,\alpha)$, where $\alpha$ on the right-hand side 
is identified with an element of $\frak{h}$ by the relation $\alpha(H)=(\alpha, H)$ for all $H \in \frak{h}$ and 
$(\cdot , \cdot)$ denotes the Killing form.
Then, we can choose elements $X_{\alpha} \in \mathfrak{g}_{\alpha}$ for each root $\alpha \in \Phi$ 
which satisfy the following properties: 
\begin{align*}
&[ X_{\alpha} , X_{-\alpha} ] = H_{\alpha}, \\
&[X_{\alpha},X_{\beta} ]= \pm (r+1) \ X_{\alpha + \beta}, 
\end{align*}
where for $\alpha,  \beta \in \Phi$ the integer $r$ is the largest such that $\beta - r \alpha$ is a root.
The elements
\begin{equation*}
 \{ X_{\alpha}, \ H_{\alpha_i} \mid \alpha \in \Phi,\alpha_i \in \Delta \}
\end{equation*}
form a so-called Chevalley basis associated to the above Cartan decomposition for $\mathfrak{g}(C)$. 
We fix such a basis for $\mathfrak{g}(C)$.   
Note that the co-roots $H_{\alpha_i}$ corresponding to the simple roots generate the Cartan algebra 
$\mathfrak{h}(C)$ and for $\alpha \in \Phi$ the element $X_{\alpha}$ clearly forms a basis of the root space 
$\mathfrak{g}_{\alpha}(C)$. 
We denote in the following by $\mathfrak{n}^+ = \sum_{\alpha \in \Phi^+} \mathfrak{g}_{\alpha}$ 
the maximal nilpotent 
subalgebra of $\mathfrak{g}$ and by $\mathfrak{n}^-$ the maximal nilpotent
subalgebra defined by the negative roots, respectively. We write further $\mathfrak{b}^+$ 
(resp. $\mathfrak{b}^-$) for the maximal solvable subalgebra of $\mathfrak{g}$ with the property %such that
$\mathfrak{b}^+ = \mathfrak{h} + \mathfrak{n}^+$ 
(resp. $\mathfrak{b}^- = \mathfrak{h} + \mathfrak{n}^-$) and we denote by $B^+$ (resp. $B^-$) 
the Borel subgroup of $G$ with Lie algebra $\mathfrak{b}^+$ (resp. $\mathfrak{b}^-$). 
Finally, let $X \in \mathfrak{g}$ and let $\mathfrak{s}$ be a subspace of $\mathfrak{g}$. 
Then we call the affine subspace $X+ \mathfrak{s}$ a plane of $\mathfrak{g}$. 
\\
In the following, we describe our method for the realization of a classical group $G$ 
as a differential Galois group. 
We explain the choice of the defining matrix $A(\textit{\textbf{t}})$. 
For the simple roots $\alpha_i \in \Delta$, we define the matrix 
$A_{\Delta}^- := \sum_{\alpha_i \in \Delta} X_{-\alpha_i}$ and 
$A_{\Delta}^+ := \sum_{\alpha_i \in \Delta} X_{\alpha_i}$ accordingly. 
It is then possible to show that there are $l$ roots 
$\gamma_i \in \Phi^+$ ($1 \leq i \leq l$), which are of height equal to the $l$ exponents of the root system of $G$ 
and whose choice depends on further properties of $\Phi$, such that the matrix
\begin{equation*}
 A(\textit{\textbf{t}}):= A_{\Delta}^- + \sum_{i=1}^l t_i X_{ \gamma_i} \in \mathfrak{g}(R_1)
\end{equation*}
does not lie in any subalgebra of $\mathfrak{g}(F_1)$ and covers by specialization 
a wide range of gauge-equivalent matrices (see below).
In addition, the differential equation $\partial(\textit{\textbf{y}})=A(\textit{\textbf{t}}) \textit{\textbf{y}}$ 
has a canonical cyclic vector which induces easily a linear parameter equation of a nice shape. 
We want to mention that we may interchange the role of the positive and negative roots 
in the definition of $A(\textit{\textbf{t}})$ to obtain a more convenient shape of the defining matrix.\\
For a successful application of the specialization bound, we need a differential equation 
$\partial(\textit{\textbf{y}})=\bar{A}\textit{\textbf{y}}$ over $F_2$ 
which is a specialization of the parameter equation 
$\partial(\textit{\textbf{y}})= A (\textit{\textbf{t}}) \textit{\textbf{y}}$ 
and has a known differential Galois group. 
Unfortunately, we have no information about the Picard-Vessiot extensions defined by the equations which are directly
available as specializations of $A(\textit{\textbf{t}})$.
As a solution we consider matrices which are gauge-equivalent to specializations of $A(\textit{\textbf{t}})$. 
Here, two matrices $A$ and $\tilde{A}$ are called gauge-equivalent over a differential field $F$ if
\begin{equation*}
 BAB^{-1} + \partial(B)B^{-1}= \tilde{A}
\end{equation*}
for some $B \in \mathrm{GL}_n(F)$.
It is possible to describe a sufficiently large set of equations which are gauge-equivalent to specializations
of $A(\textit{\textbf{t}})$ using the geometric structure of $G$ and the choice of the roots in the 
definition of $A(\textit{\textbf{t}})$. 
To be more precise, we can show that every element $A$ in the plane 
\begin{equation}
\label{subspace}
 A \in A_{\Delta}^+ + \mathfrak{b}^-(F) 
\end{equation}
is gauge-equivalent to a specialization of $A(\textit{\textbf{t}})$. 
To this purpose let us consider the adjoint action
\begin{equation*}
 \mathrm{Ad}(B): \mathfrak{g} \rightarrow \mathfrak{g}, \ X \mapsto B X B^{-1} \ 
 \mathrm{for} \ B \in G(C)
\end{equation*}  
and the logarithmic derivative $l \delta$, which is defined by
 \begin{equation*}
  l \delta: \mathrm{GL}_n(F) \rightarrow \mathfrak{gl}_n(F), \ X \mapsto \partial_{F}(X)X^{-1}.
 \end{equation*}
Obviously, we can decompose the gauge transformation of $A$ into the sum of the two maps
$\mathrm{Ad}(B)(A)$ and $l\delta(B)$, i.e. we have
 \begin{equation*}
  BAB^{-1} + \partial(B)B^{-1} = \mathrm{Ad}(B)(A)+l \delta(B).
 \end{equation*}
In order to get a better grasp of the gauge transformation, we need more information about the
images of the two maps. To begin with we study the adjoint action.  
For $X \in \mathfrak{g}$ we denote by $\mathrm{ad}(X):\mathfrak{g} \rightarrow \mathfrak{g}$
the endomorphism of $\mathfrak{g}$ defined by sending $Y \in \mathfrak{g}$ to $\mathrm{ad(X)}(Y) = [X,Y]$. 
Then, for $X \in \mathfrak{g}$ nilpotent the exponential of $\mathrm{ad(X)}$, 
\begin{equation*}
\mathrm{exp}(\mathrm{ad}(X)) = \sum_{j \geq 0} \frac{1}{j!} \mathrm{ad}(X)^j ,
\end{equation*}
is an automorphism of $\mathfrak{g}$. In fact, for $\beta \in \Phi$ and $x \in F$, $ x \ \mathrm{ad(X_{\beta})}$ is 
a nilpotent endomorphism and the effect of the automorphism $x \ \mathrm{exp}(\mathrm{ad}(X_{\beta}))$
on elements of a Chevalley basis can be described by the root system. Now let
\begin{equation*}
\mathrm{exp}: \mathfrak{g}_{\beta}  \rightarrow U_{\beta}, 
\ X_{\beta} \mapsto \sum_{j \geq 0} \frac{1}{j!}  X_{\beta}^j
\end{equation*}
be the exponential map from the root space $\mathfrak{g}_{\beta}$ to the root group $U_{\beta}$ of $G$. 
Then the relation
\begin{equation*}
\mathrm{Ad}(\mathrm{exp}(xX_{\beta}))=\mathrm{exp}(x \ \mathrm{ad}(X_{\beta})) 
\end{equation*}
shows that it is possible to describe the adjoint action of a root group element 
$u_{\beta}(x)=\mathrm{exp}(xX_{\beta})$ on a Chevalley basis by the root system. 
The explicit formulae are given in the following remark.
 \begin{remark}\label{remark3}
  For $\alpha$, $\beta \in \Phi$ linearly independent let
 $\alpha - r \beta, \dots ,\alpha + q \beta$, for $r, q \in \mathds{N}$, be the $\beta$-string through $\alpha$ and let 
 $\langle \alpha, \beta \rangle$ be the Cartan integer. We define 
 $c_{\beta,\alpha,0}:=1$ and $c_{\beta,\alpha,i}:= \pm \binom{r+i}{i}$. Then, we have
  \begin{eqnarray*}
   \mathrm{Ad}(u_{\beta}(x))(X_{\alpha}) & =& \sum\nolimits_{i=0}^q c_{\beta, \alpha,i} x^i X_{\alpha + i \beta}, \\
   \mathrm{Ad}(u_{\beta}(x))(H_{\alpha}) &=& H_{\alpha} - \langle \alpha, \beta \rangle  X_{\beta} , \\
 \mathrm{Ad}(u_{\beta}(x))(X_{-\beta}) &=& X_{-\beta} + x H_{\beta} - x^2  X_{\beta}.
\end{eqnarray*}
 \end{remark}
 Finally, we look at the logarithmic derivative. Remark~\ref{remark4} below allows us to describe the image of the elements of the root groups under the logarithmic derivative 
  during the differential transformation of $A$ in terms of the roots.
\begin{remark}\label{remark4}
 Let $G \subset \mathrm{GL}_n$ be a linear algebraic group. Then the restriction of $l \delta$ to $G$ maps
 $G(F)$ to its Lie algebra $\mathfrak{g}(F)$, i.e. we have
 \begin{equation*}
  l \delta \mid_{G}: G(F) \rightarrow \mathfrak{g}(F).
 \end{equation*} 
\end{remark}
 A proof can be found in \cite{Kov}.\\
At this point, we want to note that N. Elkies refers in \cite{Elkies} exactly to the subspace in (\ref{subspace}).
More precisely, he uses the subspace $A_{\Delta}^- + \mathfrak{b}^+$ 
to define a {\it subvariety} $\mathcal{X}$ of the flag manifold 
$G/ B^+$ and proposes it as a differential analogue 
of the Deligne-Lusztig variety.\\
%-----------------------mod_Mitch_Singer------------------------------------------
For a successful application of Theorem~\ref{specialization_bound}, we need a matrix differential equation 
$\partial(\textit{\textbf{y}}) = \bar{A}\textit{\textbf{y}}$ over $R_2$ which has $G(C)$ as differential Galois 
group and which satisfies $\bar{A} \in A_{\Delta}^- + \mathfrak{b}^+(R_2)$. Such an equation yields a 
 variant of a result from C. Mitschi and M. Singer which can be found in \cite{M/S}. 
 The difference of the original version to Proposition~\ref{mod_Mitch_Singer} below is that we modified the choice of the matrix $A_0$. 
\begin{proposition}\label{mod_Mitch_Singer}
 Let $G$ be a connected semisimple linear algebraic group and set $A_0 = \sum_{\alpha_i \in \Delta} 
 (X_{\alpha_i} + X_{- \alpha_i})$. Then there exists $A_1 \in \mathfrak{h}(C)$ such that the differential equation
 $\partial(\textit{\textbf{y}})=(A_0 + A_1 z)\textit{\textbf{y}}$ over $C(z)$ has $G$ as differential Galois group.
\end{proposition}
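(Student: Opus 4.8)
The plan is to sandwich the differential Galois group $\mathcal{H}(C)$ of $\partial(\textit{\textbf{y}})=(A_0+A_1z^2)\textit{\textbf{y}}$ between the upper bound of Proposition~\ref{Prop_upper} and a lower bound coming from the local analytic theory at its unique singular point. Since $A_0+A_1z^2\in\mathrm{Lie}(\mathcal{G})(C(z))$, Proposition~\ref{Prop_upper} gives $\mathcal{H}(C)\subseteq\mathcal{G}(C)$ up to conjugation, so it remains to prove the reverse inclusion for a suitable $A_1$. The coefficient matrix is polynomial in $z$, so the only singularity on $\mathbb{P}^1(C)$ is at $z=\infty$; writing $w=1/z$ turns the equation into $\partial_w(\textit{\textbf{y}})=-(A_0w^{-2}+A_1w^{-4})\textit{\textbf{y}}$, which has an irregular singularity (a pole of order four in $w$). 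I would fix $A_1\in\mathbf{H}(C)$ regular semisimple, so that the leading coefficient $A_1w^{-4}$ is regular semisimple and the singularity is unramified. As the equation is regular on all of $\mathbb{A}^1(C)$, which is simply connected, there is no monodromy, and the density theorem of Ramis (see \cite{P/S}; we may assume $C=\mathbb{C}$, which does not change the Galois group) identifies $\mathcal{H}(C)$ with the Zariski closure of the group generated by the exponential torus, the formal monodromy, and the Stokes matrices at $z=\infty$; since the exponential torus is a torus and the Stokes matrices generate a connected unipotent group, both lie in $\mathcal{H}(C)^{\circ}$.

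First I would show that $\mathcal{H}(C)^{\circ}$ contains the maximal torus $T$ of $\mathcal{G}$ with $\mathrm{Lie}(T)=\mathbf{H}$. The formal fundamental solution at $\infty$ is diagonalised by a formal gauge transformation with leading term the identity (the singularity being unramified and $A_1$ already diagonal), and the determining factors $q_1,\dots,q_n$ then have leading terms $\tfrac{1}{3}\lambda_i(A_1)w^{-3}$, where $\lambda_1,\dots,\lambda_n$ are the weights, counted with multiplicity, of the canonical representation of $\mathcal{G}$; for a sufficiently generic regular semisimple $A_1$ one checks, by the same genericity computation as in \cite{M/S}, that the exponential torus is exactly $T$. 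Next, the subgroup generated by $T$ and the root subgroups $\mathcal{U}_{\pm\alpha_i}$, $\alpha_i\in\Delta$, equals $\mathcal{G}$: its Lie algebra contains $\mathbf{H}$ and every $X_{\pm\alpha_i}$, hence by iterated brackets every root vector and every $H_{\alpha}$, so it is $\mathrm{Lie}(\mathcal{G})$, and the subgroup is connected. Thus it suffices to place each one-dimensional root subgroup $\mathcal{U}_{\pm\alpha_i}$ inside $\mathcal{H}(C)^{\circ}$, and this is where the shape $A_0=\sum_{\alpha_i\in\Delta}(X_{\alpha_i}+X_{-\alpha_i})$ is used: because $A_0$ has nonzero component in each of the root spaces $\mathrm{Lie}(\mathcal{G})_{\alpha_i}$ and $\mathrm{Lie}(\mathcal{G})_{-\alpha_i}$, and because for generic $A_1$ the anti-Stokes directions belonging to distinct roots are separated, the expansion of the lowest-order Stokes multipliers in the Taylor coefficients of the equation yields, for every $i$, Stokes matrices lying nontrivially in $\mathcal{U}_{\alpha_i}$ and in $\mathcal{U}_{-\alpha_i}$. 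A nontrivial element of a one-dimensional unipotent group generates it, so $\mathcal{U}_{\pm\alpha_i}\subseteq\mathcal{H}(C)^{\circ}$; hence $\mathcal{H}(C)^{\circ}=\mathcal{G}$, and with $\mathcal{H}(C)\subseteq\mathcal{G}(C)$ this gives $\mathcal{H}(C)=\mathcal{G}$.

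The hard part is the last step, namely establishing that the Stokes data is \emph{full}: that no proper connected subgroup of $\mathcal{G}$ containing $T$ can absorb all the Stokes matrices forced by the nonvanishing $X_{\pm\alpha_i}$-components of $A_0$. This is precisely the kind of computation carried out by Mitschi and Singer in their realization of connected semisimple groups over $C(z)$ (cf.\ \cite{M/S}); the only new point is that it still goes through for the particular sub-principal matrix $A_0$ rather than for an arbitrary element of $\mathrm{Lie}(\mathcal{G})$ of the right type, and this is exactly the shape we need afterwards in order to have $A_0+A_1z^2\in A_{\Delta}+\mathrm{Lie}(\mathcal{B}^-)(R_2)$ for the application of the specialization bound. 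A purely algebraic variant would instead use Proposition~\ref{Prop_lower}, exploiting that the cohomological dimension of $C(z)$ is almost one (\cite{Serr}) so that the torsor is trivial: a gauge transformation $B\in\mathcal{G}(C(z))$ carrying $A_0+A_1z^2$ into $\mathrm{Lie}(\mathcal{H})(C(z))$ would, by a pole-order estimate at $z=\infty$ together with the genericity of $A_1$, force $\mathrm{Lie}(\mathcal{H})$ to contain a Cartan subalgebra and all simple root spaces; but reducing first to a connected Galois group makes this route no shorter than the Stokes argument.
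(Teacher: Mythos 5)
The paper itself offers no argument for this proposition: it simply cites \cite{M/S} for the original version and \cite{Seiss1} for the modified choice of $A_0$. Your sketch reproduces exactly the strategy of those references — upper bound from Proposition~\ref{Prop_upper}, and for the lower bound the local analysis at the single irregular singularity $z=\infty$ via Ramis's density theorem, with a generic $A_1\in\mathbf{H}(C)$ forcing the exponential torus to be the full maximal torus and the nonzero simple-root components of $A_0$ forcing nontrivial Stokes matrices in each $\mathcal{U}_{\pm\alpha_i}$ — so in substance you are giving the proof the paper delegates. The one step you leave unverified is the one the paper also delegates and which is the actual content of the ``modification'': checking that for this particular $A_0=\sum_{\alpha_i\in\Delta}(X_{\alpha_i}+X_{-\alpha_i})$ (chosen so that $A_0+A_1z^2$ lands in $A_{\Delta}+\mathrm{Lie}(\mathcal{B}^-)(R_2)$ for the later specialization argument) the lowest-order Stokes multipliers attached to the simple roots really are nonzero; asserting this ``by the same computation as in \cite{M/S}'' is precisely what \cite{Seiss1} is cited to justify, so no new gap is introduced. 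One small inaccuracy: the group generated by all Stokes matrices need not be unipotent or connected; what you actually need (and what is true) is that each individual Stokes matrix, being unipotent, lies in the identity component $\mathcal{H}(C)^{\circ}$.
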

\begin{proof}
The strategy of the proof is to show that we can choose $A_1 \in \mathfrak{h}(C)$ such that the 
differential Galois group $G'$ of $\partial(\textit{\textbf{y}})=(A_0 + A_1 z)\textit{\textbf{y}}$ is a subgroup of $G$ 
and such that $G'$ is not equal to any proper subgroup of $G$. The first property is guaranteed by Proposition~\ref{Prop_lower}. 
Since the defining matrix $(A_0 + A_1 z)$ is for any choice 
$A_1 \in \mathfrak{h}(C)$ an element of the Lie algebra $\mathfrak{g}$, Proposition~\ref{Prop_lower} implies that $G'$ is a subgroup of $G$.\\
To show the second property more work is needed. A key ingredient to prove that %there is $A_1 \in \mathfrak{h}(C)$ such that 
$G'$ is not a proper subgroup of $G$ %is a Chevalley module. That 
is a Chevalley module. This is a faithful representation $\rho: G \rightarrow \mathrm{GL}(W)$ 
with the property that $\rho(G)$ 
leaves no line in $W$ invariant, but any proper connected closed subgroup of $\rho(G)$ has an 
invariant one-dimensional subspace in $W$. 
Thus, a Chevalley module helps us to distinguish the group $G$ from its connected closed proper subgroups. 
From \cite{P/S}, Lemma 11.32 we obtain that the differential Galois group $G'$ 
is connected and Lemma 11.34 in \cite{P/S} guarantees that for a 
connected semisimple linear algebraic group $G$ a Chevalley module $W$ exists.
\\
Let $\rho: G \rightarrow \mathrm{GL}(W)$ be now a Chevalley module for $G$.
Then there is an induced injective morphism of Lie algebras
 $d\rho : \mathfrak{g}(C) \rightarrow \mathrm{End}(W)$, where we 
omit in the following the symbols $\rho$ (resp. $d \rho$ ) when we mean the action of $G$ 
(resp. $\mathfrak{g}$) on $W$. 
From the action of $\mathfrak{h}$ on $W$, we obtain a decomposition of 
$W= \bigoplus_{\lambda \in \Lambda} W_{\lambda}$ 
into finitely many weight spaces $W_{\lambda}$ for a finite number of weights 
$\lambda \in \Lambda  \subset \mathfrak{h}^*$, where 
$\mathfrak{h}^*$ denotes the dual space of $\mathfrak{h}$. Let us denote by $\Delta^{\pm}$ the set of all simple roots and their 
negatives, that is $\Delta^{\pm}= \Delta \cup \{ - \alpha_i \mid \alpha_i \in \Delta \} $.
We choose now $A_1 \in \mathfrak{h}$ such that it satisfies the following three properties:
\begin{enumerate}
	\item[(a)] The $\alpha(A_1)$ are non-zero and distinct for the roots $\alpha \in \Delta^{\pm}$.
	\item[(b)] The $\lambda(A_1)$ are non-zero and distinct for the non-zero weights 
	$\lambda$ of the representation $d \rho$.
	\item [(c)]All eigenvalues of $\sum_{\alpha \in \Delta^{\pm}} \frac{1}{\alpha(A_1)}X_{-\alpha}X_{\alpha}$ 
	which lie in $\mathbb{Z}$ are zero.
\end{enumerate}
The roots and the weights are linear combinations of the basis elements $H_{\alpha_i}^*$ of $\mathfrak{h}^*$, the dual basis 
for the basis $\{ H_{\alpha_i} \mid 1\leq i \leq l \}$ of $\mathfrak{h}$. 
Let $\hat{C}$ be a finite extension of $\mathbb{Q}$
containing these coefficients. Since $C$ is algebraically closed, there is an infinite $\hat{C}$-basis of $C$ 
and we can choose the entries of $A_1$ to be 
distinct basis elements. Then $A_1$ satisfies the first two conditions. If $A_1$ does not yet fulfill the third property, then a suitable multiple does.\\
Let $\lambda \in \Lambda$ be an arbitrary weight. Then $e:= \lambda(A_1)$ is an eigenvalue of $A_1$ with eigenspace $W_e$. We can now write each element $w \in W$ 
as a sum $w = \sum w_e$ of eigenvectors $w_e \in W_e$ for different eigenvalues $e$. 
For $\alpha \in \Delta^{\pm}$ and an eigenspace $W_e$ with eigenvalue $e$ we obtain from \cite{Hum}, Lemma 20.1, 
that $X_{\alpha} W_e \subset W_{\alpha(A_1) + e}$. 
Then the distinct values $\alpha(A_1)$ for the roots $\alpha \in \Delta^{\pm}$ imply that 
\begin{equation*}
 A_0 W_e \subset \bigoplus_{d \neq e} W_{d}.
\end{equation*}
Let us assume that $G'$ is a proper subgroup of $G$.
Since $W$ is a Chevalley module, $G'$ fixes then a line $\langle w \rangle_{C(z)}$ with $w \in W$, $w \neq 0$ in $W$
and this line is also stabilized by $\mathfrak{g}'$. 
Further, Proposition~\ref{Prop_lower} yields that there exists $B \in G(C(z))$ such that 
\begin{equation}
\label{eq_1_singer}
\tilde{A}:=B (A_0 + A_1 z)B^{-1} + (\frac{d}{dz} B )B^{-1} \in \mathfrak{g}'(C(z))
\end{equation}  
and in combination with the above we conclude that $\tilde{A}$ satisfies $\tilde{A}w = c w $ for a suitable $c \in C(z)$.
 For $\tilde{w}:=B^{-1}w \in C(z)\otimes W$ and $A:=(A_0 + A_1 z)$ we compute 
 with~(\ref{eq_1_singer}) and the relation $\frac{d}{dz}B^{-1} = -B^{-1} (\frac{d}{dz} B)B^{-1}$ the following:
\begin{eqnarray*}
A\tilde{w} &=& B^{-1}BAB^{-1}w = B^{-1} (\tilde{A} - (\frac{d}{dz} B )B^{-1})w \\
&=& B^{-1} \tilde{A}w - B^{-1} (\frac{d}{dz} B)B^{-1}w = c B^{-1} w + (\frac{d}{dz} B^{-1} )w = c \tilde{w} + \frac{d}{dz}\tilde{w}.
\end{eqnarray*}
The derivation $\frac{d}{dz}$ on $C(z)\otimes W$ is defined by 
$\frac{d}{dz}(f \otimes v)= ( \frac{d}{dz}f )\otimes v $. 
If $\tilde{w} \notin C[z] \otimes W$ we can multiply $w$ by the common denominator of the entries 
in $\tilde{w}$ and 
we can therefore assume without loss of generality that $\tilde{w} \in C[z]\otimes W$. We obtain with suitable 
$c_0,c_1 \in C$ the equation
\begin{equation}\label{eq_2_singer}
 \left( (A_0 + A_1 z) -\frac{d}{dz}  \right) \tilde{w} = (c_0 + c_1 z) \tilde{w}.
\end{equation}
Let $\tilde{w}= w_m z^m + ... + w_1 z + w_0$ with $w_i \in W$ and $w_m \neq 0$. 
Comparing the coefficients of $z^{m+1},z^{m},z^{m-1}$ in Equation~(\ref{eq_2_singer}) we get 
\begin{eqnarray}
A_1 w_m & = &  c_1 w_m, \label{eq_3_singer}  \\ 
A_0 w_{m} + A_1 w_{m-1} & = & c_0 w_m + c_1 w_{m-1}, \label{eq_4_singer} \\
A_0 w_{m-1} + A_1 w_{m-2} - m w_m& = & c_0 w_{m-1} + c_1 w_{m-2} \label{eq_5_singer}.
\end{eqnarray} 
In the following, for a vector $v \in W$ and an eigenvalue $d$, we denote by $(v)^{(d)}$ the component of $v$ in the eigenspace $W_d$. 
Equation~(\ref{eq_3_singer}) implies that $w_m$ is an eigenvector of $A_1$ with eigenvalue $e:= c_1 $ and lies in the eigenspace $W_e$. 
With this notation Equation~(\ref{eq_4_singer}) is equivalent to
\begin{equation}\label{eq_6.1_singer} 
 A_0 w_{m} + (A_1 - e) w_{m-1}  =  c_0 w_m \in W_e .
\end{equation}
The relations $A_0 w_{m} \subset \bigoplus_{d\neq e} W_d$ and $((A_1 - e) w_{m-1})^{(e)}=0$ 
show that the left hand side of (\ref{eq_6.1_singer}) has no component in the eigenspace $W_e$ and, therefore, we have 
that $c_0=0$. This leaves us with the following equation:
\begin{equation}\label{eq_6_singer}
 \sum_{d \neq e} (e - A_1) (w_{m-1})^{(d)} = \sum_{d \neq e} (A_0 w_{m})^{(d)}.
\end{equation}
Using Equation~(\ref{eq_6_singer}) we compute 
\begin{eqnarray*}
 w_{m-1} &=&  \sum_{d} (w_{m-1})^{(d)} = (w_{m-1})^{(e)} + \sum_{d \neq e} \frac{1}{e-d}(A_0 w_{m})^{(d)} \\
 &=& (w_{m-1})^{(e)} + \sum_{d \neq e} \frac{1}{e-d}(\sum_ {\alpha \in \Delta^{\pm}} X_{\alpha} w_{m})^{(d)}. 
\end{eqnarray*}
For $\alpha \in \Delta^{\pm}$ the relation $ X_{\alpha} W_{e} \subset W_{\alpha(A_1)+e}$ shows that 
\begin{equation*}
(\sum_ {\alpha \in \Delta^{\pm}} X_{\alpha} w_{m})^{(d)} =0
\end{equation*}
for all $\alpha(A_1)+e\neq d$ and, therefore, the above expression for $w_{m-1}$ can be simplified to
\begin{equation}\label{eq_7_singer}
 w_{m-1} =  (w_{m-1})^{(e)} -  \sum_ {\alpha \in \Delta^{\pm}} \frac{1}{\alpha(A_1)} X_{\alpha} w_{m}.
\end{equation}
Using the result that $c_0=0$, Equation~(\ref{eq_5_singer}) can be rewritten into
\begin{equation}\label{eq_8_singer}
 (e-A_1)w_{m-2} = A_0 w_{m-1} - m w_m.
\end{equation}
Since the left-hand side of~(\ref{eq_8_singer}) is contained in the subspace $\bigoplus_{d \neq e} W_d$, 
we obtain that
\begin{equation}\label{eq_9_singer}
  (A_0 w_{m-1} - m w_m)^{(e)} =0.
\end{equation}
Now, we substitute $w_{m-1}$ in~(\ref{eq_9_singer}) by the right-hand side of
Equation~(\ref{eq_7_singer}).  
In order to distinguish between the roots, 
we write $A_0 = \sum_{ \alpha' \in \Delta^{\pm}}X_{  \alpha'} $. We get
\begin{eqnarray*}
0 &=& \left(A_0( \sum_ {\alpha \in \Delta^{\pm}} \frac{1}{\alpha(A_1)} X_{\alpha} w_{m})\right)^{(e)} + (A_0 w_{m-1}^{(e)})^{(e)} - m w_{m} \\
 &=& \sum_ {\alpha \in \Delta^{\pm}} \frac{1}{\alpha(A_1)} \left(\sum_{\alpha' \in \Delta^{\pm}} X_{\alpha'} X_{\alpha} w_{m}\right)^{(e)} + 
 (\sum_{\alpha' \in \Delta^{\pm}} X_{\alpha'}w_{m-1}^{(e)})^{(e)} - m w_{m}.
\end{eqnarray*}
Since $X_{\alpha'} X_{\alpha} w_{m} \in W_{e + \alpha + \alpha'}$ and $X_{\alpha'}w_{m-1}^{(e)} \in \bigoplus_{d\neq e } W_d$, the above expression reduces to
\begin{equation*}
 ( \sum_ {\alpha \in \Delta^{\pm}} \frac{1}{\alpha(A_1)}  X_{-\alpha} X_{\alpha} ) w_{m}  - m w_{m}=0.
\end{equation*}
We conclude that $w_m$ is an eigenvector of the operator $\sum_ {\alpha \in \Delta^{\pm}} \frac{1}{\alpha(A_1)}  X_{-\alpha} X_{\alpha}$ 
with eigenvalue $-m$. Then by Condition (3) we have that $m=0$ and $w=w_0 \in W$.
This leaves us with the equation 
\begin{equation}\label{eq_10_singer}
(A_0 + z A_1)w_0 = c_1 z w_0. 
\end{equation}
Comparing the coefficients in Equation~(\ref{eq_10_singer}) yields $A_1w_0 =c_1 w_0$ and $A_0 w_0 = 0$. 
Thus, the one-dimensional subspace $\langle w_0 \rangle_C$ is invariant under $A_0$ and $A_1$. 
Therefore, it is also invariant under scalar multiples, sums and bracket products of $A_0$ and $A_1$. 
In the last step, we show that $A_0$ and $A_1$ generate the whole Lie algebra $\mathfrak{g}$. 
To this purpose we construct, for each $\alpha \in \Delta$, polynomials $P_{\alpha}(T),P_{-\alpha}(T) \in C[T]$ such that 
\begin{equation*}
P_{\pm \alpha}( \mathrm{ad} A_1)(A_0) = X_{\pm \alpha}.
\end{equation*}
To simplify the notation, we denote the negative simple roots by 
\begin{equation*}
 \alpha_{l+1}:= -\alpha_1,\ \dots, \ \alpha_{2l} =-\alpha_l .
\end{equation*}
For $i \in  \left\{1,...,2l \right\}$ we will show that there exist solutions $p_{i,j} \in C$ such that 
\begin{equation} 
\label{eq_11_singer}
X_{\alpha_i} = \sum_{j=1}^{2l} p_{i, j} \mathrm{ad}^{j}(A_1)(A_0).
\end{equation}
Equation~(\ref{eq_11_singer}) is equivalent to
\begin{equation*}
X_{\alpha_i} = \sum_{j=1}^{2l} \sum_{k=1}^{2l} p_{i, j} \alpha_{k}(A_1)^j X_{\alpha_k} =\sum_{k=1}^{2l} \left( \sum_{j=1}^{2l} p_{i, j} \alpha_{k}(A_1)^j \right) X_{\alpha_k}.  
\end{equation*} 
This is equivalent to show that for $1 \leq i \leq 2l$ there exist solutions of the following linear systems of equations:
\begin{equation} 
\label{eq_12_singer}
\left(
\begin{matrix}
\alpha_{1}(A_1) & \alpha_{1}(A_1)^2 & \cdots & \alpha_{1}(A_1)^{2l}    \\
\alpha_{2}(A_1) & \alpha_{2}(A_1)^2 & \cdots & \alpha_{2}(A_1)^{2l}    \\
\vdots     &              &        & \vdots             \\
\alpha_{2l}(A_1) & \alpha_{2l}(A_1)^2 & \cdots & \alpha_{2l}(A_1)^{2l} \\
\end{matrix}
\right)\cdot{
\left(
\begin{matrix}
p_{i, 1}     \\
p_{i, 2}     \\
\vdots       \\
p_{i, 2l}    \\
\end{matrix}
\right)}
= e_i
\end{equation}
where $e_i$ denotes the $i$-th unit vector. 
Since by Condition (a) all $\alpha_{i}(A_1)\neq 0$, 
the determinant of the matrix in~(\ref{eq_12_singer}) is non-zero if and only if
 the well-known \textit{Vandermonde determinant} for  $\alpha_{i}(A_1)$ ($1 \leq i \leq 2l$) is non-zero. 
Thus by Condition (a) %on $\alpha_{i}(A_1)$ ($1 \leq i \leq 2l$) 
 the determinant of the matrix in~(\ref{eq_12_singer}) 
 is non-zero  
and, therefore, there exist solutions $p_{ij} \in C$ such that Equation~(\ref{eq_11_singer}) holds.  
Thus, we can express the matrices $X_{\pm \alpha}$ ($\alpha \in \Delta$) in terms of linear combinations
of powers of bracket products in $A_0$ and $A_1$.
Since the matrices $\left\{ X_{\pm \alpha}\right\}_{\alpha \in \Delta}$ generate $\mathfrak{g}$,
we obtain that 
$A_0$ and $A_1$ also generate $\mathfrak{g}$.\\
The line $\langle w_0 \rangle_C$ is left invariant by $A_0$ and $A_1$ and, therefore, $\mathfrak{g}$ leaves 
this line invariant. 
Since $G$ is connected, we conclude that $G$ has also $\langle w_0 \rangle_C$ 
as an invariant one-dimensional subspace. 
But this contradicts the properties of a Chevalley module. 
\end{proof}
%----------------------------SL------------------------------
\section{A linear parameter differential equation for $\mathrm{SL}_{l+1}(C)$}
In the previous section, we have seen that a key ingredient for the realization of a classical group 
$G$ by our method 
is its geometric structure. For this reason, 
the proofs for the different classical groups are very similar and we present 
in this section exemplarily the proof for the group $G$ of type $A_l$, i.e. 
the special linear group $\mathrm{SL}_{l+1}(C)$. \\
%----------------------------------Root system-----------------------------------------
An important object for the realization of 
one of the classical groups by our method is its root system. 
It is well-known that the root system of $\mathrm{SL}_{l+1}(C)$ 
is of type $A_l$. Let $\epsilon_1, \dots , \epsilon_{l+1}$ be
the standard orthonormal basis of $\mathbb{R}^{l+1}$ with respect to the usual inner product 
$(\cdot,\cdot)$ and denote by $I$ the $\mathbb{Z}$-span of this basis elements.  
 Let $I'=I \cap E$, where $E$ is the subspace of $\mathbb{R}^{l+1}$ orthogonal to the vector 
 $\epsilon_1 + \dots + \epsilon_{l+1}$. Then the root system $\Phi$ of type $A_l$ consists of the vectors 
 $\alpha \in I'$ with $(\alpha,\alpha)=2$, i.e. we have 
\begin{equation*}
\Phi= \{ \epsilon_i - \epsilon_j \mid 1\leq i,j \leq l+1 \}.
\end{equation*} 
 The elements $\alpha_i=\epsilon_i - \epsilon_{i+1}$
($1 \leq i \leq l$) are obviously independent and if $i<j$ we can write $\epsilon_i - \epsilon_j$ as
 $\epsilon_i - \epsilon_j = \alpha_i + \dots + \alpha_{j-1}$. This shows that 
 $\Delta = \{ \alpha_1, \dots , \alpha_l \}$ is a basis of $\Phi$
and we conclude that with respect to $\Delta$ the positive roots are 
\begin{equation*} 
  \Phi^+ = \{ \alpha_s + \hdots + \alpha_t \mid 1 \leq s \leq t \leq l \} ,
\end{equation*}
from which we obtain the negative roots $\Phi^- = \{ -\alpha \mid \alpha \in \Phi^+ \}$ 
by simply changing all signs. We want to note that for the following it is helpful to keep the 
shapes of the roots in mind.\\
The Dynkin diagram for $A_l$ shows that we can decompose $\Phi$ for $1 \leq k \leq l$ in subsystems
$\Phi_k$ of type $A_k$ with basis 
\begin{equation*}
 \Delta_k = \{ \alpha_{l-k+1}, \dots , \alpha_l \}.
\end{equation*}
Since $\Phi_k$ is now a root system of type $A_k$, 
it has a unique root of maximal height (see \cite{Hum}, 10.4, Lemma A). We denote this root by $\gamma_k$.
Then for $0 \leq k \leq l-1$, we define
\begin{equation*}
 \Gamma_k := \{ \gamma_i \mid \gamma_i \ \mathrm{is} \ \mathrm{the} \ \mathrm{maximal} \ \mathrm{root} \ \mathrm{of}
 \ \Phi_i^+ \ \mathrm{for} \ k+1 \leq i \leq l  \}
\end{equation*}
as the set of maximal roots of the descending chain of subsystems $\Phi= \Phi_l \supseteq \dots \supseteq \Phi_{k+1}$. 
To complete the definition of $\Gamma_k$ for all $0\leq k \leq l$ we define $\Gamma_l := \emptyset$ and 
we write shortly $\Gamma$ for $\Gamma_0$. 
\begin{remark} \label{remark1}
From the shapes of the roots in $\Phi^+$, we deduce that for $k \in \{ 1, \dots , l \}$  
the set $\Phi_k^+ \setminus \Phi_{k-1}^+$ consists of the roots
\begin{equation*}
\Phi_k^+ \setminus \Phi_{k-1}^+ =\{ \alpha_{l-k+1} + \dots + \alpha_{l-k+m} \mid 1 \leq m \leq k \}
\end{equation*}
and that $\Phi^+$ is the disjoint union of all $\Phi_k^+ \setminus \Phi_{k-1}^+$, where $\Phi_0^+$ 
is defined as the empty set. 
We conclude that for any element $m$ of $\{1, \dots ,k \}$ there is a unique root  
$\alpha \in \Phi_k^+ \setminus \Phi_{k-1}^+$ such that $\mathrm{ht}(\alpha)=m$.
A formal proof uses two inductions, i.e. an induction on the subsystems $\Phi_k$ and an inner induction on the height $m$
of the roots in $\Phi_k^+ \setminus \Phi_{k-1}^+$. 
\end{remark}
\begin{remark} \label{remark2}
Suppose $k \in \{ 1, \dots , l \}$ and $m \in \{1, \dots ,k \}$. Then Remark~\ref{remark1} implies that 
for a root $\alpha \in \Phi_k^+ \setminus (\Phi_{k-1}^+ \cup \{ \gamma_k \})$
with $\mathrm{ht}(\alpha)=m$ there exists a unique simple root $\alpha_s \in \Delta$ such that $\beta := \alpha + \alpha_s
\in \Phi_k^+ \setminus \Phi_{k-1}^+$. In particular, if $\beta - \alpha_t$ is a root for some $\alpha_t \in \Delta$,
then either $\beta - \alpha_t= \alpha$ or $\beta - \alpha_t \in \Phi_{k-1}^+$.
\end{remark}
%--------------------------------Lie Algebra ------------------------------------------
For the determination of the linear differential equation in Theorem~\ref{main_thm}, we need an 
explicit Cartan decomposition of $\mathfrak{sl}_{l+1}$, the Lie Algebra of $\mathrm{SL}_{l+1}$, 
and a Chevalley basis according to this decomposition.
It is well-known that $\mathfrak{sl}_{l+1}$ is the set of all $(l+1)\times (l+1)$-matrices with trace zero.
Let $\mathfrak{h} \subset \mathfrak{sl}_{l+1}$ be the subalgebra of all diagonal matrices.
Then $\mathfrak{h}$ is a Cartan algebra of $\mathfrak{sl}_{l+1}$.
Now let $H=(h_1, \dots, h_{l+1}) \in \mathfrak{h}$ and denote by $E_{i,j}$ the 
$(l+1)\times (l+1)$-matrix with entry $1$ at position $(i,j)$ and $0$ elsewhere. 
Then the equation 
\begin{equation*}
 [H, E_{i,j}]= (h_i - h_j) E_{i,j}
\end{equation*}
shows that the matrix $E_{i,j}$ generates the root space $\mathfrak{sl}_{\epsilon_i - \epsilon_j}$. % for $\epsilon_i - \epsilon_j \in \Phi$.
Note that for $1 \leq i < j \leq l+1$, the root $\epsilon_i - \epsilon_j$ is positive with corresponding 
root space $ \mathfrak{sl}_{\epsilon_i - \epsilon_j} =\langle E_{i,j} \rangle$ and that the root space 
corresponding to the negative of $\epsilon_i - \epsilon_j$ is generated by its transpose $E_{i,j}^t = E_{j,i}$.
Summarizing our results, we obtain that
\begin{equation*}
 \mathfrak{sl}_{l+1} = \mathfrak{h} + \sum_{ 1 \leq  i < j \leq l+1 } \langle X_{\epsilon_i - \epsilon_j}\rangle 
 + \langle X_{\epsilon_j - \epsilon_i}\rangle 
\end{equation*}  
is a Cartan decomposition for $\mathfrak{sl}_{l+1}$, where we write $X_{\epsilon_i - \epsilon_j}$ for the matrix 
$E_{i,j}$ ($i\neq j$). 
Next, we determine the co-roots. For $1 \leq i < j \leq l+1$ let the matrix $H_{i,j}$ be defined by the relation 
\begin{equation*}
 H_{i,j}= [E_{i,j},E_{j,i}]= E_{i,i}-E_{j,j} \in \mathfrak{h}.
\end{equation*}
Then, it follows from $[H_{i,j},E_{i,j}]= 2 E_{i,j}$ that $H_{i,j}$ is the co-root for $\epsilon_i - \epsilon_j$.
In the following, we denote a co-root which corresponds to a root space for a simple root $\alpha_i \in \Delta$ by 
$H_i$ ($1 \leq i \leq l$). To complete the determination of a Chevalley basis, we consider the map 
\begin{equation*}
 \phi: \mathfrak{sl}_{l+1} \rightarrow  \mathfrak{sl}_{l+1}, \ X \mapsto -X^{tr}
\end{equation*}
which is obviously an automorphism of $\mathfrak{sl}_{l+1}$ with the property that $\phi(X_{\alpha})= -X_{-\alpha}$
for $\alpha= \epsilon_i - \epsilon_j \in \Phi$. For $\alpha, \ \beta \in \Phi$, let the integer $n_{\alpha, \beta}$ 
be defined by the relation $[X_{\alpha}, X_{\beta}]=n_{\alpha, \beta} X_{\alpha + \beta}$. If we apply $\phi$
to both sides of $[X_{\alpha}, X_{\beta}]=n_{\alpha, \beta} X_{\alpha + \beta}$, we obtain 
$[X_{-\alpha}, X_{-\beta}]=-n_{\alpha, \beta} X_{-\alpha - \beta}$ from which it 
follows that $n_{-\alpha, -\beta}=-n_{\alpha, \beta}$. Since by \cite{Carter}, Theorem 4.1.2, 
$n_{\alpha, \beta} n_{-\alpha, -\beta}= -(r+1)^2$, we have that 
$n_{\alpha, \beta}= \pm (r+1)$. 
Thus, the set
%\begin{equation*}
 $\{ X_{\alpha}, \ H_i \mid \alpha \in \Phi, \ 1 \leq i \leq l \}$
%\end{equation*}
is a Chevalley basis for $\mathfrak{sl}_{l+1}$.
\\ 
%-----------------------Defining Matrix----------------------------------------
The next step is an appropriate choice of the defining matrix $A(\textit{\textbf{t}}) \in \mathfrak{sl}_{l+1}(R_1)$ 
for our parameter equation. To this purpose let
$A_{\Delta}^+:= \sum_{\alpha_i \in \Delta} X_{\alpha_i}$ be defined as in the 
previous section with the explicit matrices $X_{\alpha_i}$ from above. We set  
\begin{equation*}
A(\textit{\textbf{t}}):= A_{\Delta}^+ + \sum_{\gamma_i \in \Gamma} t_i X_{ -\gamma_i} \in \mathfrak{sl}_{l+1}(R_1).
\end{equation*}
% The roots of $\Gamma^-$ and the simple roots $\alpha_i$ imply that $A(\textit{\textbf{t}})$ is not contained 
% in any proper subalgebra of $\mathfrak{sl}_{l+1}(F_1)$. 
% Additionally, 
The choice of the above Chevalley basis yields that the matrix $A(\textit{\textbf{t}})$ has the shape of 
a companion matrix, i.e. we have
\begin{equation*}
 A(\textit{\textbf{t}}) =
 \begin{pmatrix} 0 & 1 & 0 & \ldots & 0 \\
0 & 0 & 1 & & \\
\vdots& & & \ddots& \\
 0 & \ldots& & 0 & 1\\
t_1 & t_2 & \hdots & t_l & 0
\end{pmatrix},
\end{equation*}
and induces therefore the simple and nice linear differential equation  of 
Theorem~\ref{main_thm}. % for $\mathrm{SL}_{l+1}(C)$.
We want to mention that for similar choices of $A(\textit{\textbf{t}})$ for the groups of 
type $B_l$, $C_l$, $D_l$ and $G_2$ the equation 
$\partial(\textit{\textbf{y}})= A(\textit{\textbf{t}})\textit{\textbf{y}}$ has also a canonical cyclic vector
although $A(\textit{\textbf{t}})$ is not a companion matrix anymore. 
To complete the proof for $\mathrm{SL}_{l+1}(C)$, we need to show that the two bounds for $A(\textit{\textbf{t}})$ coincide. \\
%------------------------------------Transformation Lemma------------------------------
Since we intend to apply Theorem~\ref{specialization_bound}, we need a differential equation $\partial(\textit{\textbf{y}})=\bar{A}\textit{\textbf{y}}$ 
which has $\mathrm{SL}_{l+1}(C)$ as differential Galois group
and whose defining matrix $\bar{A}$
satisfies $\bar{A} \in \mathfrak{sl}_{l+1}(R_2)$ and $\bar{A} = \sigma(A(\textit{\textbf{t}}))$ 
for a specialization $\sigma: R_1 \rightarrow R_2$. 
The following proposition shows that we have access to a large class of equations.
\begin{proposition}\label{prop_transformation}
Suppose the matrix $A$ is element of the plane $A \in A_{\Delta}^- + \mathfrak{b}^+(F) $.
Then $A$ is gauge-equivalent to a matrix in the plane
\begin{equation*}
 A_{\Delta}^- + \sum_{\gamma_i \in \Gamma} \mathfrak{sl}_{\gamma_i} (F).
\end{equation*}
\end{proposition}
 A proof of Proposition~\ref{prop_transformation} uses Lemma~\ref{lemma_cartan} and~\ref{pre_transfor_lemma} from below. 
%-------------------------------Cartan transformation-------------------------------
\begin{lemma}\label{lemma_cartan}
 Suppose the matrix $A$ satisfies $A \in  A_{\Delta}^- + \mathfrak{b}^+(F)$.
 Then $A$ is gauge-equivalent to a matrix in the plane $A_{\Delta}^- +  \mathfrak{n}^+$.
\end{lemma}
\begin{proof}
The plane $A_{\Delta}^- + \mathfrak{b}^+(F)$ writes as 
\begin{equation*}
 A_{\Delta}^- + \mathfrak{h}(F)+ \mathfrak{n}^+(F).
\end{equation*}
Thus, we need to show that we can delete by a gauge transformation the components of $A$ 
which lie in the Cartan subalgebra $\mathfrak{h}(F)$.
For  $j \in \{ 1, \dots , l+1 \}$ we denote by $A_j$ a matrix of the plane
 \begin{equation*}
  A_j \in  A_{\Delta}^- + \sum_{j \leq i \leq l} \mathfrak{h}_i + \mathfrak{n}^+(F),  
 \end{equation*}
 where $\mathfrak{h}_i = \langle H_i \rangle$. Note that we have $A_1 \in A_{\Delta}^- + \mathfrak{b}^+(F)$ and $A_{l+1} \in A_{\Delta}^- + \mathfrak{n}^+(F)$.
 We prove now the following assertion:
 Any matrix $A_j$ is gauge-equivalent to a matrix $A_{j+1}$ of the corresponding plane for $1 \leq j \leq l$. 
 To this purpose let a matrix $A_j$ be given by
  \begin{equation*}
   A_j = A_{\Delta}^- + \sum_{j \leq i \leq l} h_i H_i + \sum_{\alpha \in \Phi^+} Z_{\alpha}   
 \end{equation*}
 with $ h_i \in F$ and $Z_{\alpha} \in \mathfrak{g}_{\alpha}$.
We compute the gauge transformation of $A_j$ by a parametrized root group element $u_{\alpha_j}(x)=\mathrm{exp}(x X_{\alpha_j})$ and show that we
can choose $x \in F$ such that the coefficient of $H_j$ in 
\begin{equation*}
 \mathrm{Ad}(u_{\alpha_j}(x)) (A_j) + l \delta(u_{\alpha_j}(x)) 
 \end{equation*}
 vanishes. First, we compute the image of $A_j$ under the adjoint action and afterwards, 
 we determine the image of $\mathrm{exp}(x X_{-\alpha_j})$ under 
 the logarithmic derivate. Since the adjoint action 
 is linear, we can consider each summand of $A_j$ separately. We start with the computation of $\mathrm{Ad}(u_{-\alpha_j}(x)) (A_{\Delta}^-)$.
Remark~\ref{remark3} yields that 
$\mathrm{Ad}(u_{\alpha_j}(x)) (X_{-\alpha_j})=X_{-\alpha_j} + x H_{j} -x^2 X_{\alpha_j}$ and 
$\mathrm{Ad}(u_{\alpha_j}(x)) (X_{-\alpha_i})=X_{-\alpha_i}$ for $1\leq i \leq l$ and $i \neq j$. 
Summing up, we obtain
\begin{equation*}
 \mathrm{Ad}(u_{\alpha_j}(x)) (A_{\Delta}^-)= \sum_{i=1}^l  \mathrm{Ad}(u_{\alpha_j}(x)) (X_{\alpha_i}) \in A_{\Delta}^- + x H_j  + 
\mathfrak{sl}_{\alpha_j} (F).
 \end{equation*}
By Remark~\ref{remark3}, we have that $\mathrm{Ad}(u_{\alpha_j}(x)) (H_i)= H_i - \langle \alpha_j, \alpha_i\rangle X_{\alpha_j}$ 
for $j \leq i \leq l$ from which we deduce that
\begin{equation*}
 \mathrm{Ad}(u_{\alpha_j}(x)) (\sum_{j \leq i \leq l} h_i H_i) \in \sum_{j \leq i \leq l} h_i H_i  
+  \mathfrak{sl}_{\alpha_j}(F).
\end{equation*}
Since the subspace $\mathfrak{n}^+$  
is stabilized by $\mathrm{Ad}(u_{\alpha_j}(x))$,
 we obtain that   
\begin{equation*}
 \mathrm{Ad}(u_{\alpha_j}(x)) (\sum_{\alpha \in \Phi^+} Z_{\alpha}) \in \mathfrak{n}^+. 
\end{equation*}
Finally, Remark~\ref{remark4} implies that $l\delta (u_{\alpha_j}(x)) \in \mathfrak{sl}_{\alpha_j}(F)$.
Summing up our results, we get that 
 \begin{equation*}
  \mathrm{Ad}(u_{\alpha_j}(x))(A_j) + \l\delta(u_{\alpha_j}(x)) \in A_{\Delta}^- + (x+ h_j)H_j +  \sum_{ j+1 \leq i \leq l} h_i H_i + 
\sum_{\alpha \in \Phi^+}  \tilde{Z}_{\alpha}  %\mathfrak{sl}_{l+1}^{\delta}(F).
 \end{equation*}
 with suitable $\tilde{Z}_{\alpha} \in \mathfrak{sl}_{\alpha}(F)$.
 It follows that $A_j$ is gauge-equivalent to matrix of shape $A_{j+1}$ for $x=-h_j$.\\
 We show now by induction that for all  $j \in \{1, \dots, l \}$ the following assertion holds: The matrix $A$ is gauge-equivalent to a matrix %$A_{k+1}$.
  \begin{equation*}
 A_{j+1} \in  A_{\Delta}^- + \sum_{j+1 \leq i \leq l} \mathfrak{h}_i + \mathfrak{n}^+(F).
 \end{equation*} 
 The above argument shows that $A$ is gauge
 equivalent to a matrix $A_2$ of the required shape, i.e. the assumption is shown for $j=1$. 
 Assume $j>1$. Then the induction assumption yields that $A$ is gauge-equivalent to 
 \begin{equation*}
 A_j \in  A_{\Delta}^- + \sum_{j \leq i \leq l} \mathfrak{h}_i + \mathfrak{n}^+(F).
 \end{equation*} 
But then the above argument applied to $A_j$ shows that $A_j$ is gauge-equivalent 
to a matrix $A_{j+1}$ in the required plane. 
Thus, $A$ is gauge-equivalent to the matrix $A_{j+1}$ and the induction 
assumption holds for all $j \in \{1, \dots, l \}$. \\
The assertion of the lemma follows now from the case $j=l$.
\end{proof}
%-------------------------------------------------------------------------------------------------------------------
\begin{lemma}\label{pre_transfor_lemma}
 Let $k \in \{ 1, \dots ,l \}$ and suppose the matrix $A$ satisfies
 \begin{equation*}
  A \in A_{\Delta}^- + \sum_{ \gamma_i \in \Gamma_k} \mathfrak{sl}_{\gamma_i} (F)
  +  \sum_{ \alpha \in \Phi_k^+} \mathfrak{sl}_{\alpha}(F).
 \end{equation*}
Then $A$ is gauge-equivalent to a matrix in the plane
\begin{equation*}
 A_{\Delta}^- + \sum_{\gamma_i \in \Gamma_{k-1}} \mathfrak{sl}_{\gamma_i}(F)
 +  \sum_{ \alpha \in \Phi_{k-1}^+} \mathfrak{sl}_{\alpha}(F),
\end{equation*}
where we recall that $\Phi^+_0$ is the empty set and $\Gamma_0 = \Gamma$.
\end{lemma}
\begin{proof}
 For $1 \leq j \leq k$ we denote in the following by $\Phi_{k,j}$ the set of all roots $\alpha \in \Phi^+$ which satisfy 
$\alpha \in \Phi_k^+ \setminus \Phi_{k-1}^+$ and $ \mathrm{ht}(\alpha)\geq j$. Since $ \Phi_{k,1} =\Phi_k^+ \setminus \Phi_{k-1}^+ $, 
we obtain by Remark~\ref{remark1} that 
\begin{equation*}
 A_{\Delta}^- + \sum_{\gamma_i \in \Gamma_k} \mathfrak{sl}_{\gamma_i} + \sum_{ \alpha \in \Phi_{k}^+} \mathfrak{sl}_{\alpha}(F) 
 = A_{\Delta}^- + \sum_{\gamma_i \in \Gamma_k} \mathfrak{sl}_{\gamma_i} + \sum_{ \alpha \in \Phi_{k,1}} \mathfrak{sl}_{\alpha}(F) 
 +  \sum_{ \alpha \in \Phi_{k-1}^+} \mathfrak{sl}_{\alpha}(F).
\end{equation*}
We will prove inductively that we can delete by a gauge transformation the components of $A$  
which lie in the root spaces 
$\mathfrak{sl}_{\alpha}(F)$ for all $\alpha \in \Phi_{k,1}$ except for that $\alpha \in \Phi_{k,1}$ which satisfies $\mathrm{ht}(\alpha)=k$.
Then, since $\alpha \in \Phi_{k,1}$ with $\mathrm{ht}(\alpha)=k$ is the root $\gamma_k$ and $\Gamma_{k-1}\setminus \Gamma_{k}= \{ \gamma_k\}$, 
the assertion of the lemma follows if we rearrange the summads in the above equation accordingly.\\
In the following, we denote for $j \in \{ 1, \dots , k-1 \}$ by $A_j$ a matrix of the plane 
 \begin{equation*}
  A_j \in A_{\Delta}^- + \sum_{\gamma_i \in \Gamma_k} \mathfrak{sl}_{ \gamma_i} + \sum_{ \alpha \in \Phi_{k,j}} \mathfrak{sl}_{\alpha} + 
  \sum_{ \alpha \in \Phi_{k-1}^+} \mathfrak{sl}_{\alpha}.
 \end{equation*}
We show the following assertion: For $j \in \{ 1, \dots , k-1 \}$ a matrix $A_j$ is gauge-equivalent to a matrix
 \begin{equation*}
  A_{j+1} \in A_{\Delta}^- + \sum_{\gamma_i \in \Gamma_k} \mathfrak{sl}_{ \gamma_i} + \sum_{ \alpha \in \Phi_{k,j+1}} \mathfrak{sl}_{\alpha} + 
  \sum_{ \alpha \in \Phi_{k-1}^+} \mathfrak{sl}_{\alpha}.
 \end{equation*}
Let  
 \begin{equation*}
  A_j = A_{\Delta}^- + \sum_{\gamma_i \in \Gamma_k} Z_{ \gamma_i} + \sum_{ \alpha \in \Phi_{k,j}} Z_{\alpha} + 
  \sum_{ \alpha \in \Phi_{k-1}^+} Z_{\alpha},
 \end{equation*}
 where for $\alpha \in \Phi^+$, $Z_{\alpha}$ denotes an element of the root space $\mathfrak{sl}_{\alpha}(F)$.
 The gauge transformation of $A_j$ will be done with a root group element $u_{\beta}(x)=\mathrm{exp}(xX_{\beta})$ and, as in Lemma~\ref{lemma_cartan}, 
 we compute the images $\mathrm{Ad}(u_{\beta}(x))(A_j)$ and $l \delta (u_{\beta}(x))$ separately.
 We have to show that we can remove the component $Z_{\alpha}$ of $A$ 
 where $\alpha$ satisfies $\alpha \in \Phi_{k,j}$ 
 and $\mathrm{ht}(\alpha)=j$. By Remark~\ref{remark2} a root with this property is unique 
 and we denote it by $\bar{\alpha}$. 
 From Remark~\ref{remark2}  we also know that there is a unique simple root $\alpha_s \in \Delta$ such that 
 $\bar{\alpha} + \alpha_s \in \Phi_k^+ \setminus \Phi_{k-1}^+$. 
Thus, for $\beta := \bar{\alpha} + \alpha_s$ we obtain $\beta- \alpha_s = \bar{\alpha}$ and 
if for $\alpha_t \in \Delta$ with $\alpha_t \neq \alpha_s$ the sum $\beta - \alpha_t$ is a root, then $\beta - \alpha_t $ is an element of $ \Phi_{k-1}^+$.
These arguments and Remark~\ref{remark3} show that
\begin{equation*}
\mathrm{Ad}(u_{\beta}(x))(X_{-\alpha_s}) = X_{-\alpha_s} + x \ c_{\beta, -\alpha_s,1} X_{\bar{\alpha}} 
\end{equation*}
and $\mathrm{Ad}(u_{\beta}(x))(X_{-\alpha_i}) \in \sum_{ \alpha \in \Phi_{k-1}^+}  \mathfrak{sl}_{\alpha}(F)$ 
for $1\leq i \leq l$ and $i \neq s$. Hence, we obtain
\begin{equation*}
 \mathrm{Ad}(u_{\beta}(x))(A_{\Delta}^-) \in  A_{\Delta}^- + x \ c_{\beta, -\alpha_s,1} X_{\bar{\alpha}}
  + \sum_{ \alpha \in \Phi_{k-1}^+}  \mathfrak{sl}_{\alpha}(F).
\end{equation*}
Next, we compute the image of $\sum_{ \gamma_i \in \Gamma_{k}} Z_{\gamma_i}$ under $\mathrm{Ad}(u_{\beta}(x))$.
Since $\gamma_i$ is the maximal root of $\Phi_{i}^+$ for $k+1 \leq i \leq l$, at least one of the coefficients in $\gamma_i + q \beta$ 
must be greater than $1$ for $q \in \mathds{N} \setminus \{ 0 \}$ and, therefore, $\gamma_i + q \beta$ can not be a root of $\Phi$. This shows that the subspace 
$\sum_{\gamma_i \in \Gamma_{k}} \mathfrak{sl}_{\gamma_i}(F)$ is invariant under
$\mathrm{Ad}(u_{\beta}(x))$. In other words we have 
\begin{equation*}
  \mathrm{Ad}(u_{\beta}(x))(\sum_{ \gamma_i \in \Gamma_{k}} Z_{\gamma_i}) = \sum_{\gamma_i \in \Gamma_{k}} Z_{\gamma_i}.
\end{equation*}
A similar argumentation yields 
\begin{equation*}
 \mathrm{Ad}(u_{\beta}(x))(\sum_{ \alpha \in \Phi_{k,j} } Z_{\alpha}) =
  Z_{\bar{\alpha}} + \sum_{ \alpha \in \Phi_{k,j+1} } Z_{\alpha} =
  c_{\bar{\alpha}} X_{\bar{\alpha}} + \sum_{ \alpha \in \Phi_{k,j+1} } Z_{\alpha},
\end{equation*}
where $c_{\bar{\alpha}} \in F$ such that $Z_{\bar{\alpha}}=c_{\bar{\alpha}} X_{\bar{\alpha}}$.\\
We determine now the image $\mathrm{Ad}(u_{\beta}(x))( Z_{\alpha})$ for $\alpha \in \Phi_{k-1}^+$. 
If for $q \in \mathds{N} \setminus \{ 0 \}$ %and $\alpha \in \Phi_{k-1}^+$ 
the sum $\alpha + q \beta$ is a root of $\Phi$, 
then it is an element of $\Phi_{k, j+2}$, since $\beta \in \Phi_{k, j+1}$ and $\mathrm{ht}(\beta)= j+1$. We conclude by Remark~\ref{remark3} that 
\begin{equation*}
 \mathrm{Ad}(u_{\beta}(x))( \sum_{ \alpha \in \Phi_{k-1}^+} Z_{\alpha}) \in \sum_{ \alpha \in \Phi_{k-1}^+} Z_{\alpha}
 + \sum_{ \alpha \in \Phi_{k,j+2}} \mathfrak{sl}_{\alpha}(F).
\end{equation*}
Finally, the logarithmic derivate of $u_{\beta}(x)$ lies in the root space 
$\mathfrak{sl}_{\beta}(F)$ by Remark~\ref{remark4}. 
If we sum up everything from above, we get for suitable $\tilde{Z}_{\alpha} \in \mathfrak{sl}_{\alpha}(F)$ that
\begin{gather*}
 \mathrm{Ad}(u_{\beta}(x))(A)+ l\delta  (u_{\beta}(x)) = \\
 A_{\Delta}^- + (c_{\beta, -\alpha_s,1}x + c_{\bar{\alpha}})X_{\bar{\alpha}} +  \sum_{\gamma_i \in \Gamma_k} Z_{\gamma_i} 
 + \sum_{  \alpha \in \Phi_{k,j+1} } \tilde{Z}_{\alpha} + \sum_{ \alpha \in \Phi_{k-1}^+} \tilde{Z}_{\alpha}.
 \end{gather*}
Choosing $x= - c_{\bar{\alpha}}c_{\beta, -\alpha_s,1}^{-1}$, we obtain that $A_j$ is gauge-equivalent to a matrix of shape $A_{j+1}$.\\
We show now inductively on $j \in \{1, \dots, k-1\}$ that the matrix $A$ is gauge-equivalent to a matrix 
\begin{equation*}
  A_{j+1} = A_{\Delta}^- + \sum_{\gamma_i \in \Gamma_k} Z_{ \gamma_i} + \sum_{ \alpha \in \Phi_{k,j+1}} Z_{\alpha} + 
  \sum_{ \alpha \in \Phi_{k-1}^+} Z_{\alpha}.
\end{equation*}
If $j=1$, then $A$ is clearly gauge-equivalent to a matrix $A_2$ by the above argumentation what shows the case $j=1$. 
For $j>1$ we obtain by the induction assumption that $A$ is gauge-equivalent to 
\begin{equation*}
  A_{j} = A_{\Delta}^- + \sum_{\gamma_i \in \Gamma_k} Z_{ \gamma_i} + \sum_{ \alpha \in \Phi_{k,j}} Z_{\alpha} + 
  \sum_{ \alpha \in \Phi_{k-1}^+} Z_{\alpha}.
\end{equation*} 
Again, the above argument applied to $A_j$ yields that $A_j$ is gauge-equivalent to a matrix of form $A_{j+1}$ 
and, therefore, $A$ is gauge-equivalent to $A_{j+1}$. Finally, the induction yields for $j=k-1$ that $A$ is gauge-equivalent to
 \begin{equation*}
  A_{k} = A_{\Delta}^- + \sum_{\gamma_i \in \Gamma_k} Z_{ \gamma_i} + \sum_{ \alpha \in \Phi_{k,k}} \tilde{Z}_{\alpha} + 
  \sum_{ \alpha \in \Phi_{k-1}^+} \tilde{Z}_{\alpha},
 \end{equation*}
 where $\Phi_{k,k}$ consists of the single element $\gamma_k$. Hence, after rearranging the summands in the above equation, 
 the assertion of the lemma follows.
\end{proof}
%-----------------------proof_prop_trans----------------------------------------------------------------------
\begin{proof_prop_trans}
Let $A \in  A_{\Delta}^- + \mathfrak{b}^+(F)$. Then Lemma~\ref{lemma_cartan} implies that $A$ is gauge-equivalent to a matrix 
$A_1$ in the plane
 \begin{equation*}
 A_{\Delta}^- + \sum_{\alpha \in \Phi^+} \mathfrak{sl}_{\alpha} (F)= 
 A_{\Delta}^- + \sum_{\gamma_i \in \Gamma_l} \mathfrak{sl}_{\gamma_i} (F) + \sum_{\alpha \in \Phi_l^+ } \mathfrak{sl}_{\alpha}(F).
\end{equation*}
Note that $\Gamma_l= \emptyset$ and $\Phi_l^+= \Phi^+$ and, therefore, equality between the two planes holds.
We make now the following inductive assumption on the integer $j \in \{ 1, \dots, l-1 \}$: The matrix $A_1$ is gauge-equivalent to 
a matrix
 \begin{equation*}
 A_{j+1} \in  A_{\Delta}^- + \sum_{\gamma_i \in \Gamma_{l-j}} \mathfrak{sl}_{\gamma_i} (F) + \sum_{\alpha \in \Phi_{l-j}^+ } \mathfrak{sl}_{\alpha}(F).
\end{equation*}
If we choose $k=l$ in Lemma~\ref{pre_transfor_lemma}, we obtain that $A_1$ is gauge-equivalent to a matrix $A_2$ of the required shape. 
Thus, the induction assumption is shown in 
case of $j=1$. Now let $j > 1$. From the induction assumption for $j-1$ we obtain that $A_1$ is gauge-equivalent to 
 \begin{equation*}
 A_{j} \in  A_{\Delta}^- + \sum_{\gamma_i \in \Gamma_{l-j+1}} \mathfrak{sl}_{\gamma_i} (F) + \sum_{\alpha \in \Phi_{l-j+1}^+ } \mathfrak{sl}_{\alpha}(F).
\end{equation*}
Then, the assertion of Lemma~\ref{pre_transfor_lemma} for $k=l-j+1$ yields that $A_{j}$ is gauge-equivalent to a matrix of shape $A_{j+1}$. 
This proves the induction assumption.\\
Thus, for $j=l-1$ we get a gauge equivalence between $A$ and a matrix in the plane 
 \begin{equation*}
 A_{\Delta}^- + \sum_{\gamma_i \in \Gamma_{1}} \mathfrak{sl}_{\gamma_i} (F) + \sum_{\alpha \in \Phi_{1}^+ } \mathfrak{sl}_{\alpha}(F)= 
 A_{\Delta}^- + \sum_{\gamma_i \in \Gamma} \mathfrak{sl}_{\gamma_i} (F),
\end{equation*}
where the equality follows from the fact that $\Phi_{1}^+ = \{ \alpha_l \}$ and $\Gamma \setminus \Gamma_{1}=\{ \alpha_l \}$.
\end{proof_prop_trans}
%-----------------------------proof of the thm-----------------------------------------------
\begin{proof_thm1}
 We define a differential equation $\partial(\textit{\textbf{y}})= A(\textit{\textbf{t}}) \textit{\textbf{y}}$ by
 \begin{equation*}
  A(\textit{\textbf{t}})=A_{\Delta}^+ + \sum_ {\gamma_i \in \Gamma } t_i X_{-\gamma_i} \in \mathfrak{sl}(R_1).
 \end{equation*}
Since $A(\textit{\textbf{t}}) \in \mathfrak{sl}(R_1)$, Proposition~\ref{Prop_upper} shows that the differential Galois group $G(C)$ of 
$\partial(\textit{\textbf{y}})= A(\textit{\textbf{t}}) \textit{\textbf{y}}$ is a subgroup of $\mathrm{SL}_{l+1}(C)$.
On the other hand, by Corollary~\ref{mod_Mitch_Singer} there exists $A_1 \in \mathfrak{h}(C)$ such that
the differential Galois group of 
$\partial(\textit{\textbf{y}})=(A_0 + z A_1)\textit{\textbf{y}}$ over $F_2$ is $\mathrm{SL}_{l+1}(C)$, where $A_0$ is as in 
Proposition~\ref{mod_Mitch_Singer}, and $(A_0 + z A_1)$ is by its construction an element of the plane 
$A_{\Delta}^+ +  \mathfrak{b}^-$.
Now, if we interchange the role of the positive and negative roots in Proposition~\ref{prop_transformation}, we obtain that $(A_0 + z A_1)$ 
is gauge-equivalent to a matrix $\bar{A}$ in the plane 
\begin{equation*}
A_{\Delta}^+ + \sum_{\gamma_i \in \Gamma} \mathfrak{sl}_{-\gamma_i}(R_2).
\end{equation*}
 For the specialization $\sigma:R_1\rightarrow R_2$, $\textit{\textbf{t}} \mapsto (f_1, \dots , f_l)$,
where $f_i \in F$ such that  
\begin{equation*}
 \bar{A} = A_{\Delta} + \sum_{-\gamma_i \in \Gamma^-}  f_i X_{-\gamma_i},
\end{equation*}
we get that the differential Galois 
group of $\partial(\textit{\textbf{y}})=A(\sigma(\textit{\textbf{t}}))\textit{\textbf{y}}$ over $F_2$ is $\mathrm{SL}_{l+1}(C)$. Then by 
Theorem~\ref{specialization_bound} we have $\mathrm{SL}_{l+1}(C) \subseteq G(C)$ and in combination with the above relation we obtain that
$G(C) = \mathrm{SL}_{l+1}(C)$. \\
The matrix $A(t)$ is a companion matrix with trace zero. It follows that  $\partial(\textit{\textbf{y}})=A(\textit{\textbf{t}})\textit{\textbf{y}}$
is equivalent to the linear  differential equation in Theorem~\ref{main_thm}.
\end{proof_thm1}

\section{Generic properties of the parameter equation for $\mathrm{SL}_{l+1}$}
%We start this section with a proof of a refined version of Theorem~\ref{thm_gen} from the introduction.
The following proposition is a refined version of Theorem~\ref{thm_gen} from the introduction. For a proof 
of Theorem~\ref{thm_gen} see Remark~\ref{remark_gen} at the end of this section.
\begin{proposition}\label{generic_sl}
 Let $F$ be a differential field with field of constants $C$ and suppose $F$ satisfies the 
following property: For all $f \in F$, $F$ contains all $(l+1)$-roots of $f$.\\
Let $E/F$ be a Picard-Vessiot
 extension with defining matrix $A \in \mathrm{M}_{l+1}(F)$ and differential Galois group 
 $H(C) \subseteq \mathrm{SL}_{l+1}(C)$. Then there exists
 a specialization $\sigma : R_1 \rightarrow F$ such that
 \begin{equation*}
L(y,\sigma(\textit{\textbf{t}}))= y^{(l+1)} - \sum\nolimits_{i=1}^{l} \sigma(t_i) y^{(i-1)}=0 
\end{equation*}
defines a Picard-Vessiot extension which is differentially isomorphic to $E/F$.
\end{proposition}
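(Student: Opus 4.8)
The plan is to follow the sequence of explicit gauge transformations already sketched in the running text just above Proposition~\ref{generic_sl} and simply organize it into a clean proof. The starting point is the Cyclic Vector Theorem (\cite{Kovcyc}), which lets us assume that the defining matrix $A \in \mathrm{M}_{l+1}(F)$ of the Picard Vessiot extension $E/F$ is a companion matrix with bottom row $(a_1, \dots, a_{l+1})$. First I would record the identity $a_{l+1} = \partial(\det Y)/\det Y$ for a Wronskian fundamental matrix $Y$, proved by the standard inductive computation on the derivative of a Wronskian; since every $C \in \mathrm{SL}_{l+1}(C)$ satisfies $\det(YC) = \det(Y)$, the element $f := \det(Y)$ is $\mathrm{Gal}$-invariant and therefore lies in $F$. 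This is the only place where the hypothesis ``differential Galois group equals $\mathrm{SL}_{l+1}(C)$'' (as opposed to merely being contained in it) is used in an essential way.

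Next I would perform the three conjugations in order. Conjugating by $B_1 := \mathrm{diag}(1,\dots,1,1/f)$ gives a matrix $A_1 \in \mathrm{Lie}(\mathrm{SL}_{l+1})(F)$ — this is a direct matrix computation and requires no assumption on $F$; it also shows incidentally that the torsor of $E/F$ is trivial. Then conjugating by $B_2 := \mathrm{diag}\bigl((1/f)^{1/(l+1)}, \dots, (1/f)^{1/(l+1)}, (1/f)^{-l/(l+1)}\bigr)$ — which is where the hypothesis $\sqrt[l+1]{f} \in F$ enters, so that $B_2 \in \mathrm{SL}_{l+1}(F)$ is well defined — produces the matrix $A_2$ lying in the subspace $A_{\Delta} + \mathbf{H}(F) \oplus \bigoplus_{\delta \in \Phi^-} \mathrm{Lie}(\mathrm{SL}_{l+1})_{\delta}(F)$. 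At this point Proposition~\ref{prop_transformation} applies verbatim and furnishes $B_3 \in \mathrm{SL}_{l+1}(F)$ with $\mathrm{Ad}(B_3)(A_2) + l\delta(B_3) = \bar{A}$, where $\bar{A} = A_{\Delta} + \sum_{-\gamma_i \in \Gamma^-} f_i X_{-\gamma_i}$ is a companion matrix with bottom row $(f_1, \dots, f_l, 0)$.

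Finally I would assemble the conclusion: the product $B := B_3 B_2 B_1 \in \mathrm{GL}_{l+1}(F)$ realizes a gauge equivalence over $F$ between $\partial(\textbf{y}) = A\textbf{y}$ and $\partial(\textbf{y}) = \bar{A}\textbf{y}$, hence a differential isomorphism of the associated Picard Vessiot extensions $E/F \cong \tilde{E}/F$; and the companion matrix $\bar{A}$ corresponds precisely to the scalar equation $L(y,\textbf{t}) = y^{(l+1)} - \sum_{i=1}^{l} t_i y^{(i-1)} = 0$ under the specialization $\sigma: t_i \mapsto f_i$. Since $\bar{A} \in \mathrm{Lie}(\mathrm{SL}_{l+1})(F)$, Proposition~\ref{Prop_upper} shows the differential Galois group of $L(y,\sigma(\textbf{t}))$ is contained in $\mathrm{SL}_{l+1}(C)$, and the differential isomorphism with $E/F$ forces equality. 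The main obstacle, or rather the only real subtlety, is verifying that $A_2$ genuinely lands in the subspace required by Proposition~\ref{prop_transformation} — i.e., that the off-diagonal and Cartan parts produced by $\mathrm{Ad}(B_2) + l\delta(B_2)$ have exactly the claimed form; everything else is bookkeeping with diagonal conjugations and an appeal to the already-established Proposition~\ref{prop_transformation}.
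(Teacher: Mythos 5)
Your proposal reproduces, step for step, the argument the paper itself gives in the text preceding Proposition~\ref{generic_sl}: cyclic vector, the identity $a_{l+1}=\partial(\det Y)/\det Y$ with $f=\det Y\in F$ by $\mathrm{SL}_{l+1}$-invariance, the successive gauge transformations by $B_1$, $B_2$ (where the $(l+1)$-th root hypothesis enters), and $B_3$ from Proposition~\ref{prop_transformation}, assembled into $B_3B_2B_1$. It is correct and takes essentially the same approach as the paper.
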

\begin{proof}
By the Cyclic Vector Theorem (see for instance \cite{Kovcyc}, page 3), we can assume that the defining 
matrix  $A$ is a companion matrix, i.e. for $a_i \in F$, %($1\leq i \leq l+1$) 
$A$ has shape 
\begin{equation*}
A= \begin{pmatrix} 0 & 1 & 0 & \ldots & 0 \\
0 & 0 & 1 & & \\
\vdots& & & \ddots& \\
 0 & \ldots& & 0 & 1\\
a_1 & a_2 & \hdots &  & a_{l+1}
\end{pmatrix}.
\end{equation*}
Then a fundamental solution matrix for the equation $\partial(\textit{\textbf{y}})=A\textit{\textbf{y}}$ is a Wronskian matrix
$W(y_1, \dots, y_{l+1})=:Y \in \mathrm{GL}_{l+1}(E)$ and an inductive argument shows that
the coefficient $a_{l+1}$ of $A$ satisfies 
\begin{equation*}
 a_{l+1}=\frac{\partial(\mathrm{det}(Y))}{\mathrm{det}(Y)}.
\end{equation*}
Now, for $C \in H(C) \subseteq \mathrm{SL}_{l+1}(C)$ it follows that
$\mathrm{det}(YC) =\mathrm{det}(Y)$, meaning that    
 $\mathrm{det}(Y)$ is invariant under the action of the differential Galois group. We conclude that
$\mathrm{det}(Y) =:f$ is an element of $F$.\\
We show that the equation $\partial(\textit{\textbf{y}})= A\textit{\textbf{y}}$ is gauge-equivalent 
to a differential equation $\partial(\textit{\textbf{y}})=\bar{A}\textit{\textbf{y}}$
with defining matrix
\begin{equation*}
\bar{A}= \begin{pmatrix} 0 & 1 & 0 & \hdots & 0 \\
0 & 0 & 1 & & \\
\vdots& & & \ddots& \\
 0 & \ldots& & 0 & 1\\
f_1& f_2 & \hdots &  f_l & 0
\end{pmatrix}.
\end{equation*}
The first step is to show that $\bar{A}$ is gauge-equivalent to an element of the Lie algebra 
of $\mathrm{SL}_{l+1}(F)$. To this purpose,
let $B_1$ be the diagonal matrix $B_1:= \mathrm{diag}(1,\dots,1, \frac{1}{f})$. 
Simple matrix multiplications show that
\begin{equation*}
\mathrm{Ad} (B_1) (A) + l\delta (B_1) = \begin{pmatrix} 0 & 1 & 0 & \hdots & 0 \\
0 & 0 & \ddots & & \\
\vdots& & & 1 & 0 \\
 0 & \ldots& & 0 & f\\
\frac{a_1}{f} & \frac{a_2}{f} & \hdots & \frac{a_{l}}{f} & 0
\end{pmatrix}:=A_1 \in \mathfrak{sl}_{l+1}(F) .
\end{equation*}
Note that until now we did not need any additional assumptions on the differential field $F$. 
We conclude that for every differential field $F$ and every Picard-Vessiot ring $R/F$ 
with differential Galois group $\mathrm{SL}_{l+1}(C)$ the corresponding torsor $\mathcal{Z}$
has an $F$-rational point.\\
The next step is a gauge transformation of $A_1$ by the diagonal matrix 
\begin{equation*}
B_2:=\mathrm{diag}((\frac{1}{f})^{\frac{1}{l+1}},\dots,(\frac{1}{f})^{\frac{1}{l+1}},(\frac{1}{f})^{-\frac{l}{l+1}})
\end{equation*}
where we assumed that the differential field $F$ contains an $(l+1)$-root of $f$. 
We obtain
\begin{equation*}
\mathrm{Ad} (B_2) (A_1)  + l \delta(B_2) = 
\begin{pmatrix} 
- \frac{\partial(f)}{(l+1) f} & 1 & 0 & \hdots & 0 \\
0 & - \frac{\partial(f)}{(l+1)  f} & \ddots & & \\
\vdots& & & 1  & 0 \\
 0 & \ldots& & - \frac{\partial(f)}{(l+1) f} & 1\\
a_1 & a_2 & \hdots & a_{l} & \frac{l \, \partial(f)}{(l+1) f}
\end{pmatrix}
=: A_2 .
\end{equation*}
Since $A_2$ is an element of the plane
\begin{equation*}
 A_{\Delta}^+ + \mathfrak{h}(F) + \sum_{\alpha \in \Phi^-} \mathfrak{sl}_{\alpha}(F),
\end{equation*}
we can apply Proposition~\ref{prop_transformation}. It yields that there exists a matrix
$B_3 \in \mathrm{SL}_{l+1}(F)$ such that 
\begin{equation*}
\mathrm{Ad}(B_3) (A_2) + l\delta (B_3) = \begin{pmatrix} 0 & 1 & 0 & \hdots & 0 \\
0 & 0 & 1 & & \\
\vdots& & & \ddots& \\
 0 & \ldots& & 0 & 1\\
f_1& f_2 & \hdots &  f_l & 0
\end{pmatrix}=: \bar{A}.
\end{equation*}
Hence, the matrix $B_3 B_2 B_1 \in \mathrm{GL}_{l+1}(F)$ defines a differential isomorphism
from $E/F$ to a Picard-Vessiot extension $\tilde{E}/F$ for the differential equation
$\partial(\textit{\textbf{y}})= \bar{A} \textit{\textbf{y}}$ and the map 
$\sigma: (t_1,\dots,t_l) \mapsto (f_1,\dots,f_l)$ is the required specialization.
\end{proof}

\begin{remark}\label{remark_gen}
Note that an algebraically closed 
differential field $\bar{F}$ with field of constants $C$ satisfies automatically the condition in Proposition~\ref{generic_sl}.
Further, if $F$ is an arbitrary differential field with constants $C$ and 
$\sigma: R_1 \rightarrow F$ is any 
specialization of the parameters, then by Proposition~\ref{Prop_upper} the differential Galois group
of a Picard-Vessiot extension for $L(y,\sigma(\textit{\textbf{t}}))$ is a subgroup of $\mathrm{SL}_{l+1}(C)$.
This proves Theorem \ref{thm_gen} from the introduction.
\end{remark}

\section{Further results and conclusions}
As outlined in the Section 2 and 3, it is possible to apply our method to the remaining classical groups.
For example, for the groups of type $B_l$, $C_l$, $D_l$ and $G_2$ (here $l=2$) we proved in \cite{Seiss} that similar nice linear parameter
differential equations can be computed. The results for these groups are summarized in the following theorem. 
\begin{theorem}
\label{main_thm_conclusion}
The linear parameter differential equation
\begin{enumerate}
\item 
 $ \begin{aligned}[t] 
L(y,\textbf{t})= y^{(l+1)} - \sum\nolimits_{i=1}^{l} t_i y^{(i-1)}=0
\end{aligned}$ 
has $\mathrm{SL}_{l+1}(C)$ as differential Galois group over $F_1$.
\vspace{2mm}
\item $\begin{aligned}[t] 
L(y,\textbf{t})= y^{(2l)} - \sum\nolimits_{i=1}^{l} (-1)^{i-1} (t_i y^{(l-i)})^{(l-i)}=0
\end{aligned}$  
has $\mathrm{SP}_{2l}(C)$ as differential Galois group over $F_1$.
\vspace{2mm}
\item $\begin{aligned}[t] 
L(y,\textbf{t})= y^{(2l+1)} - \sum\nolimits_{i=1}^{l} (-1)^{i-1} ((t_i y^{(l+1-i)})^{(l-i)}+(t_i y^{(l-i)})^{(l+1-i)})= 0
\end{aligned}$\\
has $\mathrm{SO}_{2l+1}(C)$ as differential Galois group over $F_1$.
\vspace{2mm}
\item $\begin{aligned}[t] 
L(y,\textbf{t})=  y^{(2l)} -2 \sum\nolimits_{i=3}^{l} (-1)^{i} ((t_i y^{(l-i)})^{(l+2-i)}+(t_i y^{(l+1-i)})^{(l+1-i)})-
\end{aligned}$ \\
$\begin{aligned}[t]
(t_2 y^{(l-2)} + t_1 y)^{(l)} - ((-1)^l t_1 z_1 + z_2) -\sum\nolimits_{i=0}^{l-2} (t_2^{l-2-i} z_1)^{(i)} =0
\end{aligned}$ 
has $\mathrm{SO}_{2l}(C)$ as differential Galois group over $F_1$. 
\vspace{2mm}
\item $\begin{aligned}[t] 
L(y,\textbf{t})= y^{(7)} + 2t_1 y' + 2 (t_1 y)' + 2 (t_2 y^{(4)})' + (t_2y')^{(4)} -2 (t_2(t_2y')')'= 0
\end{aligned}$ \\
has $\mathrm{G}_{2}(C)$ as differential Galois group over $F_1$.
\end{enumerate}
The substitutions $z_1$ and $z_2$ in (4) are given by
\begin{eqnarray*}
z_1 &:=& y^{(l)}- t_2 y^{(l-2)} - t_1 y \\ 
z_2 &:=& \frac{(t_2^{(l-2)}+(-1)^{l-2}t_1)^{(1)}}{t_2^{(l-2)}+(-1)^{l-2}t_1} \cdot \bigg(  y^{(2l-1)}- 
(t_2 y^{(l-2)} + t_1 y)^{(l-1)}
 \\
&& -  2 \sum_{i=3}^{l} (-1)^{i} ( (t_i y^{(l-i)})^{(l+1-i)} + (t_i y^{(l+1-i)})^{(l-i)} )- \sum_{i=0}^{l-3} (t_2^{(l-3-i)} z_1)^{(i)} \bigg).
\end{eqnarray*}
\end{theorem}
The parameter differential equations in Theorem~\ref{main_thm_conclusion} define large families of linear 
differential equations. By construction they represent well the geometric structure of the underlying Lie groups 
(see \cite{Kostant} for an interesting connection) and they seem to be very general. 
So far we do not know 
which types of Picard-Vessiot extensions $E/F$ can be obtained by specializations of the parameters. 
Since the defining matrices are elements in the Lie algebras of the corresponding groups, we have to restrict 
this question to Picard-Vessiot extensions which are function fields of the trivial torsor. 
%We hope that we can describe the orbits of our defining matrices under gauge transformation of the groups by 
%their geometric structure.
The best we can hope for is that our equations are quasi-generic equations.

%------------------------------------------------------------------------------

\end{document}